\documentclass[12pt, a4paper]{article}

\usepackage{authblk}
\author[1,2]{Masahiro Ikeda\thanks{ikeda@ist.osaka-u.ac.jp / masahiro.ikeda@a.riken.jp}}
\affil[1]{
Graduate School of Information Science and Technology,
The University of Osaka, 1-5,
Yamadaoka,
Suita-shi,
Osaka 565-0871,
Japan
}
\affil[2]{
Center for Advanced Intelligence Project RIKEN, 1-4-1, Nihonbashi, Chuo-ku, Tokyo 103-0027, Japan
}
\author[3]{Gustavo de Paula Ramos\thanks{Corresponding Author: gpramos@icmc.usp.br}}
\affil[3]{
Instituto de Ciências Matemáticas e de Computação,
Universidade de São Paulo,
Avenida Trabalhador São-Carlense 400,
13566-590,
São Carlos - SP,
Brazil
}

\usepackage[inline]{enumitem}
\setlist[enumerate,1]{label = (\roman*)}
\setlist[enumerate,2]{label = (\alph*)}

\usepackage[hidelinks]{hyperref}

\usepackage{amsmath}
\numberwithin{equation}{section}

\usepackage{amsthm}
\newtheorem{thm}{Theorem}[section]
 
\newtheorem{lem}[thm]{Lemma}
\newtheorem{cor}[thm]{Corollary}

\theoremstyle{definition}

\theoremstyle{remark}
\newtheorem{rmk}[thm]{Remark}

\usepackage{amsfonts, amssymb, mathrsfs, xcolor, mathtools}

\usepackage[backend=biber, sorting=none]{biblatex}
\AtEveryBibitem{\clearfield{note}}
\newcommand{\nat}{\mathbb{N}}
\newcommand{\real}{\mathbb{R}}
\newcommand{\complex}{\mathbb{C}}
\newcommand{\op}{- \Delta_\alpha}

\newcommand{\Sobolev}{H^1_\alpha}
\newcommand{\Sobolevstar}{H^{- 1}_\alpha}

\newcommand{\Ne}{\mathcal{N}_\omega}

\newcommand{\ev}{\ell_\alpha}
\newcommand{\aev}{\abs{\ev}}
\newcommand{\ef}{G_{\aev}}

\newcommand{\Sp}{\mathscr{S}}

\newcommand{\ACP}{\mathcal{A}_\omega}

\newcommand{\E}{\mathcal{E}}
\newcommand{\Action}{\mathcal{S}_\omega}

\newcommand{\eps}{\varepsilon}

\newcommand{\dif}{\ \mathrm{d}}

\newcommand{\iu}{\mathrm{i}}
\newcommand\odd{\mathrm{o}}
\newcommand{\even}{\mathrm{e}}

\DeclareMathOperator{\VS}{VS}
\DeclareMathOperator{\Dom}{D}

\DeclarePairedDelimiter{\abs}{\lvert}{\rvert}
\DeclarePairedDelimiter{\norm}{\lVert}{\rVert}
\DeclarePairedDelimiter{\parens}{(}{)}
\DeclarePairedDelimiter{\set}{\{}{\}}

\DeclarePairedDelimiter{\angles}{\langle}{\rangle}

\DeclarePairedDelimiter{\coi}{\lbrack}{\lbrack}

\DeclarePairedDelimiter{\ooi}{\rbrack}{\lbrack}

\title{Ground states of the defocusing nonlinear Schrödinger equation with a point interaction in dimensions 2 and 3}

\begin{document}

\maketitle

\begin{abstract}
This paper is concerned with ground states of the defocusing nonlinear Schrödinger equation with a point interaction,
\[
\iu \partial_t \psi
=
-\Delta_\alpha \psi + \psi |\psi|^{p - 2}
\quad \text{in} \quad
\mathbb{R} \times \mathbb{R}^N,
\]
where $- \Delta_\alpha$ denotes the Laplacian of point interaction centered at the origin with inverse s-wave scattering length $- 2 (N - 1) \pi \alpha$ and we suppose that either (i)
$N = 2$, $\alpha \in \real$ and $p > 2$ or (ii)
$N = 3$, $\alpha < 0$ and $2 < p < 3$. At sufficiently small masses, (i) we prove that this equation admits ground states, (ii) we obtain some qualitative properties of ground states and (iii) we obtain some results relating ground states with critical points of the associated action functional.
\end{abstract}

\tableofcontents

\section{Introduction}
This paper is concerned with ground states of the following \emph{defocusing nonlinear Schrödinger equation (NLSE) with a point interaction}:
\begin{equation}
\label{eqn:defocusing_NLSE}
\iu \partial_t \psi
=
\op \psi + \psi |\psi|^{p - 2}
\quad \text{in} \quad
\real \times \real^N,
\end{equation}
where $\op$ denotes the Laplacian of point interaction centered at the origin with inverse s-wave scattering length
$- 2 (N - 1) \pi \alpha$ and we suppose that either
\begin{equation}
\label{eqn:N=2}
N = 2, \quad \alpha \in \real \quad \text{and} \quad p > 2
\end{equation}
or
\begin{equation}
\label{eqn:N=3}
N = 3, \quad \alpha < 0 \quad \text{and} \quad 2 < p < 3.
\end{equation}

Before recalling the precise definition of $\op$, we anticipate that $\op$ is an $L^2$-self-adjoint operator which models the action of $- \Delta$ together with a potential concentrated at the origin in spatial dimensions $N \in \{1, 2, 3\}$. Even though the definition of $\op$ was proposed in the 1960s (see \cite{berezinRemarkSchrodingerEquation1961}), the study of nonlinear problems with a point interaction is much more recent.

To contextualize, let us recall a few papers concerned with the focusing NLSE with a point interaction,
\begin{equation}
\label{eqn:focusing_NLSE}
\iu \partial_t \psi = \op \psi - \psi \abs{\psi}^{p - 2}
\quad \text{in} \quad
\real \times \real^N.
\end{equation}
On one hand, this equation is relatively well understood when
$N = 1$, in which case the energy space associated with $\op$ is the usual Sobolev space $H^1 (\real)$. To cite a few papers, \cite{fukuizumiNonlinearSchrodingerEquation2008, fukuizumiStabilityStandingWaves2008, lecozInstabilityBoundStates2008} contain results about the orbital stability/instability of the unique positive solution; \cite{masakiAsymptoticStabilitySolitary2023} investigated the asymptotic stability; \cite{ohtaStrongInstabilityStanding2016} studied the strong instability and \cite{ikedaGlobalDynamicsStanding2017} contains results about the global dynamics below ground states (see also \cite{pavaInstabilityCnoidalpeakNLSd2012, pavaNonlinearSchrodingerEquation2013, pavaStabilityStandingWaves2017} for related results). On the other hand, the study of \eqref{eqn:focusing_NLSE} in higher dimensions $N \in \{2, 3\}$ started receiving increasing attention only in the last four years. More precisely, \cite{fukayaStabilityInstabilityStanding2022} proved the existence of action ground states and studied conditions for their orbital stability/instability in the case $N = 2$; \cite{adamiExistenceStructureRobustness2022, adamiGroundStatesPlanar2022} proved the existence and obtained qualitative properties of ground states and clarified the relation between the notions of ground states and action mininizers and \cite{fukayaUniquenessNondegeneracyGround2025} proved the uniqueness and nondegeneracy of the positive action ground state. Furthermore, more general semilinear problems with a point interaction in higher dimension $N \in \{2, 3\}$ were already considered in \cite{depaularamosGroundStatesPlanar2026, depaularamosMinimizersMassconstrainedFunctionals2026, georgievStandingWavesGlobal2024, michelangeliSingularHartreeEquation2021, pomponioNonlinearScalarField2025}.

Shifting the attention to the defocusing NLSE, let us highlight the difference introduced by the presence of a point interaction. On one hand, it is well known that the defocusing NLSE
\[
\iu \partial_t \psi
=
- \Delta \psi + \psi |\psi|^{p - 2}
\quad \text{in} \quad
\real \times \real^N
\]
does not admit nontrivial standing waves with finite energy (see \cite[Exercise 6.4]{fibichNonlinearSchrodingerEquation2015}). In fact, this result is a direct consequence of the fact that
$- \Delta$ is a positive-definite operator. On the other hand, it is no longer clear whether a similar result holds for \eqref{eqn:defocusing_NLSE} when either \eqref{eqn:N=2} or \eqref{eqn:N=3} is satisfied, in which case the Laplacian of point interaction $\op$ has a negative eigenvalue.

Problem \eqref{eqn:defocusing_NLSE} has received comparatively less attention than \eqref{eqn:focusing_NLSE}. A few results were already obtained for this equation in the case $N = 1$: \cite{kaminagaStabilityStandingWaves2009} proved the existence and orbital stability of action ground states and \cite{cuccagnaStabilitySmallSolitons2019} studied the asymptotic behavior of small solutions. In higher dimension, \cite{caspersDifferentApproachSingular1994} proved the existence of standing waves in the case $N = 3$ and, very recently, \cite{fukayaStandingWavesDefocusing2026} showed that there exists a unique positive standing wave which is radial and radially decreasing; furthermore, this preprint obtained the explicit form of the set of nontrivial standing waves for a fixed frequency
$\omega \in [0, \aev[$ in dimensions $N \in \set{2, 3}$.

To finish the brief literature review, we cite a few papers with results which are applicable to both the focusing and defocusing cases: \cite{masakiStabilitySmallSolitary2020} investigated the asymptotic stability in the case $N = 1$;
\cite{cacciapuotiWellPosednessNonlinear2021} studied the well-posedness of the associated Cauchy problem; \cite{cacciapuotiFailureScatteringNLSE2023} obtained conditions for failure of scattering and \cite{boniPointInteractionsSingular2026} explained the relation between standing waves of the NLSE in the punctured Euclidean space $\real^N \setminus \{0\}$ with solutions to the NLSE with a point interaction.

In this context, this paper is motivated by the absence of results about ground states of \eqref{eqn:defocusing_NLSE} in higher dimensions.

We proceed to a few preliminaries needed to recall the precise definition of $\op$. Consider a fixed $\lambda > 0$. We let
$G_\lambda \colon \real^N \setminus \{0\} \to ]0, \infty[$
be defined by
\[
G_\lambda \parens{x}
=
\begin{cases}
\displaystyle
\frac{1}{2 \pi}
K_0 \parens{\sqrt{\lambda} \abs{x}},
&\text{if} ~ N = 2;
\\
\\
\displaystyle
\frac{e^{- \sqrt{\lambda} \abs{x}}}{4 \pi \abs{x}},
&\text{if} ~ N = 3,
\end{cases}
\]
so that
\begin{equation}
\label{eqn:Glambda}
- \Delta G_\lambda + \lambda G_\lambda = \delta_0
\quad \text{in} \quad
\real^N
\end{equation}
in the sense of distributions, where $K_0$ denotes a modified Bessel function of the second kind (see \cite[Section 9.6]{abramowitzHandbookMathematicalFunctions1972}). A change of variable shows that
\begin{equation}
\label{eqn:integrability_of_Glambda}
G_\lambda \in L^r
\quad \text{for every} \quad
r \in
\begin{cases}
\displaystyle
\coi{2, \infty},
&\text{if} ~ N = 2;
\\
\coi{2, 3},
&\text{if} ~ N = 3.
\end{cases}
\end{equation}
We also let
\begin{equation}
\label{eqn:beta_alpha}
\beta_\alpha (\lambda)
=
\begin{cases}
\displaystyle
\alpha
+
\frac{\log \sqrt{\lambda} + \gamma - \log 2}{2 \pi},
&\text{if} ~ N = 2;
\\
\\
\displaystyle
\alpha
+
\frac{\sqrt{\lambda}}{4 \pi},
&\text{if} ~ N = 3,
\end{cases}
\end{equation}
where $\gamma$ denotes the Euler--Mascheroni constant.

Now, we recall the rigorous definition of $\op$, referring the reader to the classical treatise \cite{albeverioSolvableModelsQuantum1988} for more information (see also \cite{galloneSelfAdjointExtensionSchemes2023} for a more modern treatment). The family of operators
$\{\op\}_{\alpha \in ]- \infty, \infty]}$
is the family of self-adjoint extensions in $L^2$ of the closure of
$- \Delta|_{C_0^\infty (\real^N \setminus \{0\})}$.
When $\alpha = \infty$, we obtain the Friedrichs extension
$- \Delta_\infty = - \Delta|_{H^2}$. When
$\alpha \in \real$, $\op$ acts as
\[\op (\phi + q G_\lambda) = - \Delta \phi - q \lambda G_\lambda\]
for every $\phi + q G_\lambda \in \Dom (\op)$, where
\begin{equation}
\label{eqn:Dom}
\Dom (\op)
:=
\set*{
	\phi + q G_\lambda:
	\phi \in H^2, ~
	q \in \complex, ~
	\lambda > 0
	~ \text{and} ~
	\beta_\alpha (\lambda) q
	=
	\phi \parens{0}
}.
\end{equation}
An important aspect of $\Dom (\op)$ is that there exists a continuum of decompositions of its elements in the sense that given a fixed $u \in \Dom (\op)$, we can associate every $\lambda > 0$ with a decomposition of $u$ with the form of the elements in the RHS of \eqref{eqn:Dom}. This phenomenon also occurs in a slightly larger space, so we postpone this discussion to Remark \ref{rmk:decomposition}. We recall that the spectrum of $\op$ has the form
\[
\sigma \parens{\op}
=
\sigma_{\text{p}} \parens{\op}
\cup
\sigma_{\text{cont}} \parens{\op},
\]
where
\[\sigma_{\text{p}} \parens{\op} = \{\ev\},
\quad
\sigma_{\text{cont}} \parens{\op} = \coi{0, \infty}
\]
and
\begin{equation}
\label{eqn:ev}
\ev
:=
\begin{cases}
- 4 e^{- 4 \pi \alpha - 2 \gamma},
&\text{if} ~ N = 2 ~ \text{and} ~ \alpha \in \real;
\\
- (4 \pi \alpha)^2,
&\text{if} ~ N = 3 ~ \text{and} ~ \alpha < 0
\end{cases}
\end{equation}
is the unique eigenvalue of $\op$ (see \cite[Theorems 1.1.4 and 5.4]{albeverioSolvableModelsQuantum1988}). Furthermore, $\ev$ is a simple eigenvalue and $\ef$ is the eigenfunction associated with
$\ev$. The value of $\beta_\alpha (\lambda)$ varies as follows in function of $\ev$:
\begin{itemize}
\item
$\beta_\alpha (\lambda) > 0$ if, and only if,
$\lambda > \aev$;
\item
$\beta_\alpha (\lambda) = 0$ if, and only if,
$\lambda = \aev$;
\item
$\beta_\alpha (\lambda) < 0$ if, and only if,
$0 < \lambda < \aev$.
\end{itemize}
We also remark that $\op$ has no eigenvalue when $N = 3$ and
$\alpha \in [0, \infty[$, but this situation is outside the scope of this paper.

We are interested in \emph{standing waves} of \eqref{eqn:defocusing_NLSE}, that is, solutions of the form
\begin{equation}
\label{eqn:standing_wave}
\psi (t, x) = u (x) e^{\iu \omega t},
\end{equation}
where $u \in \Dom (\op)$ and $\omega \in \real$.
A function of the form \eqref{eqn:standing_wave} solves \eqref{eqn:defocusing_NLSE} precisely when
\begin{equation}
\label{eqn:stationary_defocusing_NLSE}
\op u + \omega u + u |u|^{p - 2} = 0
\quad \text{in} \quad
\real^N.
\end{equation}

Let us briefly introduce the adequate functional space to obtain a variational formulation of \eqref{eqn:stationary_defocusing_NLSE}, referring the reader to \cite[Section 1.15]{galloneSelfAdjointExtensionSchemes2023} for more details. Consider the vector space
\[
\VS (\Sobolev)
:=
\set*{
	\phi + q G_\lambda:
	\phi \in H^1, ~ q \in \complex ~ \text{and} ~ \lambda > 0
}
\]
and the following sesquilinear form defined on $\VS (\Sobolev)$:
\begin{align*}
h_\alpha (u_1, u_2)
:=
\int \nabla \phi_1 (x) \cdot \nabla \phi_2 (x) \dif x
+
\lambda \parens*{
	\angles{\phi_1, \phi_2}_{L^2} - \angles{u_1, u_2}_{L^2}
}
+
\beta_\alpha (\lambda) q_1 \overline{q_2}
\end{align*}
for every $u_j = \phi_j + q_j G_\lambda \in \Sobolev$
(for a proof that this quantity does not depend on $\lambda > 0$, see \cite[Lemma 2.4]{depaularamosMinimizersMassconstrainedFunctionals2026}). In this context, the aforementioned adequate Hilbert space is
\[
\Sobolev
:=
\parens*{\mathrm{VS} (\Sobolev), \angles{\cdot, \cdot}_{\Sobolev}}
\]
(also called the \emph{form domain} or \emph{energy space} associated with $\op$), where
\[
\angles{u_1, u_2}_{\Sobolev}
:=
h_\alpha (u_1, u_2) + \parens*{1 + \aev} \angles{u_1, u_2}_{L^2}
\]
for every $u_1, u_2 \in \Sobolev$. In view of \eqref{eqn:integrability_of_Glambda} and the Sobolev embeddings of $H^1$, it follows that $\Sobolev \hookrightarrow L^r$ for every
\[
r \in
\begin{cases}
[2, \infty[,
&\text{if} ~ N = 2;
\\
[2, 3[,
&\text{if} ~ N = 3.
\end{cases}
\]

\begin{rmk}[A continuum of decompositions of functions in $\Sobolev$]
\label{rmk:decomposition}
Suppose that $u \in H^1_\alpha$.
\begin{enumerate}
\item
There exists a unique $Q_\alpha (u) \in \complex$ (called the \emph{charge} of $u$) such that the following implication holds:
if $\phi_\lambda \in H^1$, $q \in \complex$ and $\lambda > 0$ are such that $u = \phi_\lambda + q G_\lambda$, then
$q = Q_\alpha (u)$.
\item
It follows we can associate every $\lambda > 0$ with a unique
$\phi_\lambda \in H^1$ such that
$u = \phi_\lambda + Q_\alpha (u) G_\lambda$.
\end{enumerate}
For more details, see \cite[Section 1.3]{depaularamosMinimizersMassconstrainedFunctionals2026} or \cite[Section 1.3.2]{depaularamosGroundStatesPlanar2026}.
\end{rmk}

At this point, we can finally define the considered notion of ground state. Consider the \emph{energy functional}
$\E \colon \Sobolev \to \real$
defined by
\begin{equation}
\label{eqn:energy_functional}
\E (u)
=
\frac{1}{2} h_\alpha (u, u)
+
\frac{1}{p} \|u\|_{L^p}^p.
\end{equation}
In this context, an \emph{ground state} of \eqref{eqn:defocusing_NLSE} with mass $\mu$ is a solution to the following minimization problem:
\begin{equation}
\label{eqn:minimization_problem}
\begin{cases}
\E (u) = E (\mu) := \inf \{\E (v): v \in \Sp (\mu)\};
\\
u \in \Sp (\mu),
\end{cases}
\end{equation}
where
\[
\Sp (\mu) := \set*{v \in \Sobolev: \|v\|_{L^2}^2 = \mu}.
\]
It is obvious that $E (\mu) > - \infty$. Indeed, the equality
$\inf \sigma \parens{- \Delta_\alpha} = \ev$
implies $\E (u) \geq \frac{\ev \mu}{2}$ for every
$u \in \Sp (\mu)$.

The following result is an important stepping stone to obtain our main theorem. More precisely, this lemma is proved by means of a careful study of the properties of the \emph{ground state energy function}
$E \colon [0, \infty[ \to ]- \infty, \infty]$
inspired by the arguments in \cite{bellazziniScalingPropertiesFunctionals2011, bellazziniStableStandingWaves2011}.

\begin{lem}
\label{lem:E_is_negative_and_SD}
Suppose that either \eqref{eqn:N=2} or \eqref{eqn:N=3} is satisfied. Then there exists
$\mu_0 = \mu_0 (N, \alpha, p) > 0$ such that $E|_{]0, \mu_0[}$ takes negative values and is strictly decreasing.
\end{lem}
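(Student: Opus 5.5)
Negativity at small masses comes from testing with the eigenfunction. Set $\varphi := \ef/\|\ef\|_{L^2}$. Since $\ef$ is the eigenfunction of $\op$ associated with $\ev$, we have $h_\alpha(\varphi,\varphi) = \ev$. By \eqref{eqn:integrability_of_Glambda} and $p>2$ (and $p<3$ when $N=3$), $\varphi \in L^p$. Hence, for every $\mu>0$,
\[
E(\mu) \le \E(\sqrt{\mu}\,\varphi) = \frac{\ev}{2}\mu + \frac{\mu^{p/2}}{p}\|\varphi\|_{L^p}^p .
\]
Because $\ev<0$ and $p/2>1$, the right-hand side is negative for $0<\mu<\mu_1$, with $\mu_1 := \bigl(p\aev/(2\|\varphi\|_{L^p}^p)\bigr)^{2/(p-2)}$. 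Moreover $E(\mu) \le \frac{\ev}{2}\mu + C\mu^{p/2}$.

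For strict monotonicity I would use a scaling-type argument in the spirit of Bellazzini--Siciliano, using only the amplitude scaling $v \mapsto \sqrt{\theta}\, v$, since spatial dilations do not preserve $\op$. For $\theta>1$ and $v\in\Sp(\mu)$,
\[
\E(\sqrt\theta v) = \theta\Bigl(\tfrac12 h_\alpha(v,v) + \tfrac{\theta^{p/2-1}}{p}\|v\|_p^p\Bigr)
= \theta\,\E(v) + \frac{\theta^{p/2}-\theta}{p}\|v\|_p^p .
\]
The nonlinearity is defocusing, so this yields an upper bound with the wrong sign, and the direct Bellazzini argument fails. I would therefore instead prove that $\mu \mapsto E(\mu)/\mu$ is strictly decreasing near $0$, or more directly show $E(\theta\mu) < E(\mu)$ using almost-minimizers.

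The plan is to show that for small $\mu$ every near-minimizer $v\in\Sp(\mu)$ with $\E(v)\le E(\mu)+\eps$ satisfies $\|v\|_p^p \le C\mu^{p/2}$, and that it has a Rayleigh quotient $h_\alpha(v,v)/\mu \le \ev + C\mu^{(p-2)/2}$. Concretely:
\begin{enumerate}
\item From $\frac12 h_\alpha(v,v) + \frac1p\|v\|_p^p \le \frac{\ev}{2}\mu + C\mu^{p/2}$ and $h_\alpha(v,v) \ge \ev\mu$, deduce both $\|v\|_p^p \le C\mu^{p/2}$ and $h_\alpha(v,v) - \ev\mu \le C\mu^{p/2}$.
\item Then
\[
E(\theta\mu) \le \theta\,\E(v) + \frac{\theta^{p/2}-\theta}{p}\,C\mu^{p/2}
\le \theta E(\mu) + \theta\eps + C'(\theta^{p/2}-\theta)\mu^{p/2}.
\]
Letting $\eps\to0$ gives
\[
E(\theta\mu) - E(\mu) \le (\theta-1)E(\mu) + C'(\theta^{p/2}-\theta)\mu^{p/2}.
\]
\item Since $E(\mu) \le \frac{\ev}{2}\mu + C\mu^{p/2}$, the main term $(\theta-1)\frac{\ev}{2}\mu$ is negative of order $\mu$, while the error is of order $\mu^{p/2}$. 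The error can be absorbed when $\theta\mu$ stays below some $\mu_0$, i.e. for $\theta\in\,]1,\mu_0/\mu[$.
\end{enumerate}

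The main obstacle is uniformity in $\theta$. The factor $\theta^{p/2}\mu^{p/2} = (\theta\mu)^{p/2}$ is bounded by $\mu_0^{p/2}$, while $(\theta-1)\aev\mu/2$ can be small when $\theta$ is close to $1$. I would handle this by writing the error as
\[
\frac{C'}{p}\,\theta\,(\theta^{(p-2)/2}-1)\,\mu^{p/2} \le C''(\theta-1)\,\mu_0^{(p-2)/2}\,\mu
\]
for $\theta\mu\le\mu_0$, using the mean value theorem on $\theta^{(p-2)/2}$ (for $p<4$ the derivative is bounded on $\theta\ge1$ by $(p-2)/2$; for $p\ge4$ use $\theta\le\mu_0/\mu$ and bound by $(\theta\mu)^{(p-4)/2}\mu_0$-type estimates). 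Choosing $\mu_0$ small so that $C''\mu_0^{(p-2)/2} < \aev/4$ gives
\[
E(\theta\mu) - E(\mu) \le -(\theta-1)\frac{\aev}{4}\mu < 0,
\]
which is strict decrease on $]0,\mu_0[$, with $\mu_0 \le \mu_1$ ensuring negativity.
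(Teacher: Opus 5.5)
Your argument is correct in substance and takes a more direct route than the paper. The paper uses only the qualitative limit of Lemma~\ref{lem:limit}, i.e.\ $E(\mu)+\frac{\aev\mu}{2}=o(\mu)$. It inserts this into the same amplitude-scaling bound you use, $E(t\mu)\le tE(\mu)+(t^{p/2}-t)\bigl(E(\mu)+\frac{\aev\mu}{2}\bigr)$, and gets only a \emph{local} decrease near $t=1$ from the sign of the derivative there (Lemma~\ref{lem:eps_mu}). To globalize, it then needs lower semicontinuity of $E$ (Lemma~\ref{lem:E_is_lower_semicontinuous}, which relies on the coercivity Lemma~\ref{lem:coercivity}) and the abstract real-variable Lemma~\ref{lem:elementary}. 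You instead keep the explicit rate $0\le E(\mu)+\frac{\aev\mu}{2}\le\frac{K}{p}\mu^{p/2}$, $K:=\norm{\varphi}_{L^p}^p$, from the eigenfunction test. This makes the estimate uniform in $\theta$ as long as $\theta\mu<\mu_0$, so you get strict monotonicity in one stroke, with an explicit $\mu_0$ and a linear rate of decrease. Lower semicontinuity, coercivity and Lemma~\ref{lem:elementary} become unnecessary. The paper's scheme has only the advantage of working from qualitative $o(\mu)$ information.

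One step does not work as you justify it: absorbing $\theta(\theta^{(p-2)/2}-1)\mu^{p/2}$ when $p<4$, which is the only relevant case for $N=3$. Bounding the derivative of $\theta\mapsto\theta^{(p-2)/2}$ by $\frac{p-2}{2}$ leaves $\frac{p-2}{2}(\theta-1)\mu\cdot\theta\mu^{(p-2)/2}$. But $\theta\mu^{(p-2)/2}=\theta^{(4-p)/2}(\theta\mu)^{(p-2)/2}$ is not bounded by a multiple of $\mu_0^{(p-2)/2}$ uniformly in $1<\theta<\mu_0/\mu$: it reaches $\mu_0\mu^{(p-4)/2}$, which blows up as $\mu\to0$.

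The inequality you want is nevertheless true if you apply the mean value theorem to $s\mapsto s^{p/2}$ instead. Combining both error terms and writing $\nu:=\theta\mu$,
\[
E(\nu)-E(\mu)
\le
-\frac{\aev}{2}(\nu-\mu)+\frac{K}{p}\parens*{\nu^{p/2}-\mu^{p/2}}
\le
-\frac{1}{2}\parens*{\aev-K\mu_0^{\frac{p-2}{2}}}(\nu-\mu)
\]
for all $0<\mu<\nu<\mu_0$ and every $p>2$, with no case distinction. Taking $\mu_0:=(\aev/(2K))^{2/(p-2)}$, which is smaller than your $\mu_1$, gives exactly your final bound $E(\theta\mu)-E(\mu)\le-(\theta-1)\frac{\aev}{4}\mu$.
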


Finally, our main result establishes the existence and some qualitative properties of ground states.

\begin{thm}
\label{thm:main}
Suppose that either \eqref{eqn:N=2} or \eqref{eqn:N=3} is satisfied. Suppose further that
$\mu_0 = \mu_0 (N, \alpha, p) > 0$
is furnished by Lemma \ref{lem:E_is_negative_and_SD} and
$0 < \mu < \mu_0$. The following conclusions are satisfied.
\begin{enumerate}
\item
\label{thm:main:i}
The NLSE \eqref{eqn:defocusing_NLSE} admits a ground state with mass $\mu$.
\item
\label{thm:main:ii}
If $u$ is a ground state of \eqref{eqn:defocusing_NLSE} with mass $\mu$, then it has the following properties.
\begin{enumerate}
\item
\label{thm:main:ii:a}
$u \in \Dom (\op)$ and \eqref{eqn:stationary_defocusing_NLSE} is satisfied with
$\omega := - (h_\alpha (u, u) + \|u\|_{L^p}^p)$.
\item
\label{thm:main:ii:b}
$Q_\alpha (u) \neq 0$ and
$u - Q_\alpha (u) G_\lambda \in H^2 \setminus \set{0}$
for every $\lambda > 0$.
\item
\label{thm:main:ii:c}
$u$ is radial.
\end{enumerate}
\end{enumerate}
\end{thm}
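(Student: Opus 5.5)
For part \ref{thm:main:i} I will run a concentration-compactness argument built on Lemma \ref{lem:E_is_negative_and_SD}. Fix $0<\mu<\mu_0$ and a minimizing sequence $(u_n)\subset\Sp(\mu)$.

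\textbf{Compactness of minimizing sequences.} Boundedness in $\Sobolev$ follows from $h_\alpha(u,u)\ge \ev\mu$ and the form of $\E$. Indeed, $\E(u_n)$ bounded gives $\|u_n\|_{L^p}$ bounded, and then $h_\alpha(u_n,u_n)+(1+\aev)\mu$ is bounded too. Pass to a subsequence with $u_n\rightharpoonup u$ in $\Sobolev$.

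Next I split $u_n=\phi_n+Q_\alpha(u_n)G_{\lambda}$ for a fixed $\lambda$. The charge functional $Q_\alpha$ is continuous linear on $\Sobolev$ and one-dimensional, so $Q_\alpha(u_n)\to Q_\alpha(u)$. Since $G_\lambda$ is a fixed function, the charge part converges strongly in every $L^r$. Only the regular part $\phi_n\rightharpoonup\phi$ in $H^1$ can lose compactness, and it does so only by spreading out or translating to infinity.

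Write $v_n=u_n-u$. By Brezis--Lieb and weak convergence,
\[
\E(u_n)=\E(u)+\tfrac12\|\nabla(\phi_n-\phi)\|_{L^2}^2+\tfrac1p\|v_n\|_{L^p}^p+o(1),
\qquad \|u_n\|_{L^2}^2=\|u\|_{L^2}^2+\|v_n\|_{L^2}^2+o(1).
\]
Here I need to check that $h_\alpha(v_n,v_n)$ reduces to $\|\nabla(\phi_n-\phi)\|^2+o(1)$. This holds because the charge of $v_n$ tends to $0$, and the $\lambda$-terms cancel up to $o(1)$.

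The remainder therefore has nonnegative energy. Since $\phi_n-\phi$ converges to $0$ locally, the remainder behaves like a free defocusing term, so its energy is $\ge 0=E_{\text{free}}$. Setting $m=\|u\|_{L^2}^2$, this gives $E(\mu)\ge \E(u)\ge E(m)$ when $m>0$, and $E(\mu)\ge 0$ when $u=0$.

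The case $u=0$ contradicts $E(\mu)<0$ from Lemma \ref{lem:E_is_negative_and_SD}. If $0<m<\mu$, then $E(m)>E(\mu)$ by strict monotonicity, which is again a contradiction. Hence $m=\mu$, $u\in\Sp(\mu)$ and $\E(u)\le\liminf\E(u_n)=E(\mu)$, so $u$ is a ground state.

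I expect this dichotomy step to be the main obstacle: proving that the regular part gives no negative energy at infinity, and handling the decomposition carefully when $m\in(0,\mu)$. In that case I must compare with $E(m)$ while the remainder mass sits only in $H^1$-type pieces.

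\textbf{Euler--Lagrange equation and regularity (\ref{thm:main:ii:a}).} A ground state is a constrained critical point, so there is a Lagrange multiplier $\omega$ with
\[
h_\alpha(u,v)+\omega\langle u,v\rangle+\langle |u|^{p-2}u,v\rangle=0 \quad\text{for all } v\in\Sobolev.
\]
Testing with $v=u$ gives $\omega\mu=-(h_\alpha(u,u)+\|u\|_p^p)$. I would recheck the normalization of $\omega$ against the statement, since as displayed it is off by a factor of $\mu$.

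Since $|u|^{p-2}u\in L^2$ (using $2(p-1)<\infty$ for $N=2$, and $2(p-1)<4$ together with the fact that $\Sobolev$ embeds into $L^r$ for $r<3$, which is why $p<3$ is needed for $N=3$), the form identity with $L^2$ right-hand side shows $u\in\Dom(\op)$ by the representation theorem for forms, and the equation holds.

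\textbf{Nonzero charge and nonzero regular part (\ref{thm:main:ii:b}).} Suppose $Q_\alpha(u)=0$. Then $u\in H^1$ and $h_\alpha(u,u)=\|\nabla u\|^2\ge0$, so $\E(u)>0>E(\mu)$, which is a contradiction.

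Suppose instead $\phi_\lambda:=u-QG_\lambda=0$ for some $\lambda$. Then $u=QG_\lambda$ lies in $\Dom(\op)$, which forces $\beta_\alpha(\lambda)Q=0$, hence $\lambda=\aev$ and $u$ is the eigenfunction. Plugging into the equation, $\op u+\omega u=(\ev+\omega)u=-|u|^{p-2}u$, and this is impossible because $|G|^{p-2}$ is not constant.

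Regularity $\phi_\lambda\in H^2$ comes from the domain characterization.

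\textbf{Radial symmetry (\ref{thm:main:ii:c}).} First I replace $u$ by $|u|$. Since $h_\alpha(|u|,|u|)\le h_\alpha(u,u)$ (via $|\nabla|\phi||\le|\nabla\phi|$ plus handling of the charge phase), $|u|$ is also a ground state. Then I apply Schwarz symmetrization to $|u|$, decomposed with $\lambda=\aev$ so that $\beta=0$. Because $G_{\aev}$ is radially decreasing, a rearrangement inequality for $\phi+QG$ lowers $h_\alpha$ strictly unless $u$ is already radial, giving $\E(u^*)\le\E(u)$.

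To upgrade this to a statement about every ground state, I use the equality cases. A cleaner route may be the one used in the focusing papers: uniqueness of the charge phase and equality in P\'olya--Szeg\H{o} applied to the regular part.
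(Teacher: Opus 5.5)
\paragraph{Parts (i) and (b).} Part (i) is correct and is, in substance, the paper's argument. The Brezis--Lieb splitting is more than you need: weak lower semicontinuity of $\E$ already gives $\E(u)\le E(\mu)<0$. Then $m=\norm{u}_{L^2}^2\in[0,\mu[$ is excluded by strict monotonicity of $E$, exactly as you do, so the ``main obstacle'' you anticipate never arises. Your treatment of (b) coincides with the paper's. You are also right that $\omega$ in (a) should be divided by $\mu$; the paper itself later uses $h_\alpha(u,u)+\omega\mu+\norm{u}_{L^p}^p=0$.

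\paragraph{The gap in (c).} Schwarz symmetrization goes the wrong way for the defocusing energy. If $u=\phi+qG$ with $G$ radial and radially decreasing, then $\norm{\phi^*+qG}_{L^r}\ge\norm{\phi+qG}_{L^r}$; the paper recalls this inequality precisely to explain why it avoids symmetrization. For $r=p$, this raises $\frac1p\norm{u}_{L^p}^p$, which enters $\E$ with a plus sign, so $\E(u^*)\le\E(u)$ does not follow. For $r=2$, it raises the mass, and a competitor with larger mass gives no contradiction anyway, since $E$ is decreasing. So there is no inequality whose equality case you could exploit, and the focusing-case route through P\'olya--Szeg\H{o} is unavailable.

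The missing idea is to symmetrize only with respect to hyperplane reflections. For a linear reflection $T$, write $\phi=u-q\ef=\phi_\even+\phi_\odd$ (the $T$-even and $T$-odd parts) and set $u_\even=\phi_\even+q\ef$. Then:
\begin{enumerate}
\item Convexity of $z\mapsto\abs{z}^r$ gives $\norm{u_\even}_{L^r}\le\norm{u}_{L^r}$ for $r\in\{2,p\}$.
\item Since $\ef$ is $T$-even, $\angles{\phi_\odd,\ef}_{L^2}=0$ and $\nabla\phi_\even\perp\nabla\phi_\odd$ in $L^2$. Hence $h_\alpha(u_\even,u_\even)=h_\alpha(u,u)-\norm{\nabla\phi_\odd}_{L^2}^2$.
\item So $u_\even$ has mass $\nu\le\mu$ and $\E(u_\even)\le E(\mu)<0$, which gives $\nu>0$.
\item Strict monotonicity of $E$ on $]0,\mu_0[$ forces $\nu=\mu$, i.e.\ $u_\odd=0$.
\item Reflections generate $\mathrm{O}(N)$, so $u$ is radial.
\end{enumerate}
This operation lowers the quadratic form, the $L^p$ term and the mass simultaneously, which is exactly what Schwarz symmetrization fails to do here.

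\paragraph{A second problem, in (a) for $N=3$.} Your claim $\abs{u}^{p-2}u\in L^2$ requires $u\in L^{2(p-1)}$. But $\Sobolev\hookrightarrow L^r$ only for $r<3$, so your argument covers only $p<5/2$; the bound $2(p-1)<4$ plays no role.

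This cannot be patched. By (b), $Q_\alpha(u)\neq0$, and that proof uses only energy, not (a). If $u\in\Dom(\op)$, then $u=\phi_\lambda+Q_\alpha(u)G_\lambda$ with $\phi_\lambda\in H^2\subset L^\infty$. Hence $\abs{u}^{p-1}\ge c\abs{x}^{-(p-1)}$ near the origin for some $c>0$, which is not square integrable on $\real^3$ when $p\ge5/2$. Yet the equation would give $\abs{u}^{p-2}u=-\op u-\omega u\in L^2$.

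Hence, for $N=3$ and $5/2\le p<3$, conclusion (a) fails, and with it the $H^2$ claim in (b). The paper's proof of (a) only refers to planar arguments, where $G_\lambda\in L^r$ for all $r\ge2$, so it does not address this range either. Your reasoning is valid for $N=2$ and for $N=3$ with $p<5/2$. In the remaining range you only have the weak (form) Euler--Lagrange equation. That still suffices to show $u-Q_\alpha(u)G_\lambda\not\equiv0$, by testing against $C_0^\infty(\real^3\setminus\{0\})$.
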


Let us comment on the proof of Theorem \ref{thm:main} \ref{thm:main:i}. Suppose that $(u_k)_{k \in \nat}$ is a minimizing sequence of the constrained energy functional $\E|_{\Sp (\mu)}$ and
$u_k \rightharpoonup u_\infty$ in $\Sobolev$ as $k \to \infty$. It is easy to prove that the energy functional $\E$ is weakly lower semicontinuous (see Lemma \ref{lem:weak_lower_semicontinuity}), so
\begin{equation}
\label{eqn:lower_energy}
\E (u_\infty) \leq \lim_{k \to \infty} \E (u_k) = E (\mu).
\end{equation}
The compactness of
$\set{u_k}_{k \in \nat \cup \{\infty\}}$
only fails when
$0 \leq \|u_\infty\|_{L^2}^2 < \mu$,
but these inequalities are not satisfied due to Lemma \ref{lem:E_is_negative_and_SD} and \eqref{eqn:lower_energy}, hence the existence of ground states.

Now, we comment on the proof of Theorem \ref{thm:main} \ref{thm:main:ii}. While it suffices to use Schwarz symmetrization to prove that ground states of the focusing equation \eqref{eqn:focusing_NLSE} are radial (see \cite{fukayaStabilityInstabilityStanding2022,adamiExistenceStructureRobustness2022,adamiGroundStatesPlanar2022}), this method does not seem to be applicable for the defocusing equation \eqref{eqn:defocusing_NLSE} due to the following inequality:
\[
\norm{\phi_\lambda^* + q G_\lambda}_{L^p}
\geq
\norm{\phi_\lambda + q G_\lambda}_{L^p}
\]
for every $\phi_\lambda + q G_\lambda \in \Sobolev$ (see \cite[Lemma 4.2]{fukayaStabilityInstabilityStanding2022}). As such, we use the fact that
$E|_{]0, \mu[}$ is strictly decreasing to prove \ref{thm:main:ii:c} by a comparison of levels of energy. On the other hand, it suffices to proceed as in \cite[Appendix A]{adamiGroundStatesPlanar2022} to prove \ref{thm:main:ii:a} and we only have to argue as in the proof of \cite[Proposition 5.2]{adamiGroundStatesPlanar2022} to prove \ref{thm:main:ii:b}.

An alternative way to obtain standing waves of \eqref{eqn:defocusing_NLSE} consists in looking for solutions to \eqref{eqn:stationary_defocusing_NLSE} with an \textit{a priori} known $\omega \in \real$, in which case it suffices to look for critical points of the \emph{action functional}
$\Action \colon \Sobolev \to \real$
defined by
\begin{equation}
\label{eqn:action_functional}
\begin{aligned}
\Action (u)
&=
\E (u) + \frac{\omega}{2} \norm{u}_{L^2}^2
\\
&=
\frac{1}{2} h_\alpha (u, u)
+
\frac{\omega}{2} \norm{u}_{L^2}^2
+
\frac{1}{p} \|u\|_{L^p}^p.
\end{aligned}
\end{equation}
In this situation, we denote the set of nontrivial critical points of $\Action$ by
\[
\ACP
:=
\set*{
	u \in \Sobolev \setminus \set{0}:
	\Action'(u) = 0
}.
\]
The following very recent result shows that, up to phase change, the action functional admits a unique nontrivial critical point when $0 < \omega < \aev$.

\begin{thm}[{\cite[Theorem 1.4]{fukayaStandingWavesDefocusing2026}}]
\label{thm:FOR}
Suppose that either \eqref{eqn:N=2} or \eqref{eqn:N=3} is satisfied. Suppose further that $0 < \omega < \aev$. Then there exists a unique positive, radial and radially decreasing function $u \in \Sobolev \setminus \set{0}$ such that
\[
\ACP
=
\set{e^{\iu \theta} u : \theta \in \real}.
\]
\end{thm}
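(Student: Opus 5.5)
The plan has two parts: existence of a positive solution by minimizing $\Action$, and a Brezis--Oswald/Picone type uniqueness argument. The key structural observation is that, taking $\lambda = \omega$ in Remark \ref{rmk:decomposition}, every $u = \phi_\omega + q G_\omega \in \Dom(\op)$ satisfies
\[
\op u + \omega u = (-\Delta + \omega)\phi_\omega ,
\qquad \beta_\alpha(\omega)\, q = \phi_\omega(0),
\]
and $\beta_\alpha(\omega) < 0$ because $0 < \omega < \aev$. So \eqref{eqn:stationary_defocusing_NLSE} becomes $(-\Delta+\omega)\phi_\omega = -u\abs{u}^{p-2}$ together with a boundary condition at the origin of the ``wrong'' sign. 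That wrong sign is exactly what makes nontrivial solutions possible.

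\emph{Step 1 (existence of a positive radial critical point).}
\begin{itemize}
\item Show that $\Action$ is coercive and bounded below on $\Sobolev$. The quadratic part $h_\alpha + \omega\norm{\cdot}_{L^2}^2$ is positive definite on the $L^2$-orthogonal complement of $\ef$ and negative only along $\ef$. Along that direction the superquadratic term $\frac1p\norm{u}_{L^p}^p$ (with $p>2$) dominates.
\item Show that a minimizer exists. $\Action$ is weakly lower semicontinuous, and one can restrict to radial functions. The radial Strauss-type compactness $\Sobolev_{\mathrm{rad}} \hookrightarrow L^r$, valid here since $G_\lambda$ is a fixed radial profile, gives compactness of minimizing sequences.
\item Show the minimizer is nontrivial, since $\Action(t\ef) < 0$ for small $t>0$.
\item Show it can be taken nonnegative and radially decreasing. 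The form $h_\alpha$ generates the positivity-preserving semigroup $e^{t\Delta_\alpha}$, so Beurling--Deny gives $h_\alpha(\abs{u},\abs{u}) \le h_\alpha(u,u)$. Rearranging $\phi_\omega$ while keeping $q>0$ handles the monotonicity, as in the focusing literature.
\item Obtain strict positivity from $\phi_\omega = \mathcal{G}_\omega * (-u^{p-1}) + (\text{boundary term})$ and a maximum principle, or directly from positivity of the resolvent of $\op+\omega$ applied to a suitable splitting.
\end{itemize}

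\emph{Step 2 (uniqueness among positive solutions).} Let $u_1,u_2>0$ both lie in $\ACP$. I would test the equation for $u_1$ with $(u_1^2-u_2^2)/u_1$ and the equation for $u_2$ with $(u_1^2-u_2^2)/u_2$, then subtract.
\begin{itemize}
\item Justify that these test functions belong to $\Sobolev$. Their charges are $q_1 - q_2 q_2/q_1 \cdot(\dots)$, which is well defined because all $q_j>0$ and $u_j \sim q_j G$ near $0$.
\item Use a Picone identity for the form $h_\alpha$. Writing $u_j = \phi_j + q_j G_\lambda$, the charge contributions $\beta_\alpha(\lambda)\abs{q}^2$ cancel exactly in the Picone combination, leaving a nonnegative gradient term.
\item This yields
\[
\int \parens*{u_1^{p-2}-u_2^{p-2}}\parens*{u_1^2-u_2^2} \le 0,
\]
hence $u_1 = u_2$ by strict monotonicity of $s\mapsto s^{p-2}$.
\end{itemize}

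\emph{Step 3 (every critical point is a phase rotation of the positive one).} Let $u \in \ACP$ and write $u = \phi_\omega + qG_\omega$.
\begin{itemize}
\item Show $q\neq 0$. Otherwise $u\in H^1$ solves the defocusing equation $-\Delta u + \omega u + u\abs{u}^{p-2} = 0$ with $\omega>0$, and testing with $\bar u$ forces $u=0$.
\item Rotate the phase so that $q>0$.
\item Show $u$ is real. The imaginary part $\psi = \operatorname{Im} u$ has zero charge, so it lies in $H^1$. It solves the linear equation $(-\Delta+\omega+\abs{u}^{p-2})\psi = 0$, and testing with $\psi$ gives $\psi = 0$.
\item Show $u>0$. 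Kato's inequality for $\op$ (again from positivity preservation) shows that $\abs{u}$ is a subsolution, $\op\abs{u} + \omega\abs{u} + \abs{u}^{p-1} \le 0$. Running the Picone comparison of Step 2 between the subsolution $\abs{u}$ and the positive solution $v$ from Step 1 should give $\abs{u} \le v$ and then $\abs{u} = v$. Combined with $q>0$ and connectedness of $\real^N\setminus\{0\}$, this forces $u = v$.
\end{itemize}

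The main obstacle I expect is Step 3's sign argument, namely ruling out sign-changing real critical points, together with the rigorous justification of the Picone test functions near the singularity. If the subsolution comparison proves delicate, my first fallback would be to analyze the radial reduction and the representation $u = -(-\Delta+\omega)^{-1}(u\abs{u}^{p-2}) + qG_\omega$ directly. Since $\phi_\omega(0) = \beta_\alpha(\omega) q < 0$ and $G_\omega$ dominates near $0$, I would try to show that any real solution with $q>0$ cannot become negative, using a maximum principle on the region where $u<0$, which contains no origin.
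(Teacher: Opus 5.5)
The paper does not prove this statement. It is quoted from \cite[Theorem 1.4]{fukayaStandingWavesDefocusing2026} and used only as a black box in the proof of Theorem~\ref{thm:notions}, so your outline has to be judged on its own. The architecture works: minimize $\Action$, prove Brezis--Oswald uniqueness among positive solutions, then reduce an arbitrary $u \in \ACP$ to a positive one. Your arguments for $Q_\alpha(u) \neq 0$ and $\operatorname{Im} u = 0$ are correct.

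\textbf{Main gap: sign-changing solutions in Step~3.} Your main route for excluding sign-changing real solutions does not close. A Picone comparison between the subsolution $\abs{u}$ and the solution $v$ gives at most $\abs{u} \le v$. Nothing upgrades this to equality: $\abs{u}$ is not a supersolution, and the reverse test function $v^2/\abs{u}$ is undefined on the nodal set of $u$. So Step~3 as written does not rule out a real solution with positive charge that changes sign away from the origin.

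Your fallback is the right argument, and it needs no Kato inequality.
\begin{itemize}
\item Since $q > 0$ and the regular part $\phi_\lambda$ is continuous (elliptic regularity), $u > 0$ near the origin. Hence $u^- \in H^1$ has zero charge and vanishes near $0$.
\item Because $-\Delta G_\lambda = -\lambda G_\lambda$ on the support of $u^-$, you get $h_\alpha(u, u^-) = \int \nabla u \cdot \nabla u^- \dif x = -\norm{\nabla u^-}_{L^2}^2$.
\item Testing the equation with $u^-$ then yields $\norm{\nabla u^-}_{L^2}^2 + \omega \norm{u^-}_{L^2}^2 + \norm{u^-}_{L^p}^p = 0$, so $u^- \equiv 0$.
\item The strong maximum principle on the connected set $\real^N \setminus \set{0}$ gives $u > 0$, and Step~2 concludes.
\end{itemize}

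\textbf{Second flaw: monotonicity in Step~1.} Rearranging $\phi_\omega$ does not give monotonicity ``as in the focusing literature''. In the same-sign decomposition, symmetrization does not decrease $\norm{\phi_\lambda^* + q G_\lambda}_{L^p}$, and that term now enters $\Action$ with a plus sign. This is exactly the obstruction the paper points out in its introduction. For $\lambda = \omega$ one has $\phi_\omega = -(-\Delta + \omega)^{-1}(u^{p-1}) < 0$. Nonexpansivity of rearrangement then shows only that $q G_\omega - \abs{\phi_\omega}^*$ has no larger action. That function is radial, but as a difference of two decreasing functions it need not be decreasing.

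Instead, get radial symmetry from the uniqueness in Step~2 together with rotation invariance. Get monotonicity from $(r^{N-1} u')' = r^{N-1}(\omega u + u^{p-1}) > 0$ combined with $r^{N-1} u'(r) \to 0$ as $r \to \infty$.

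\textbf{Remaining work in Step~2.} Step~2 is complete only once you check that $(u_1^2 - u_2^2)/u_j \in \Sobolev$ and justify the Picone inequality for $h_\alpha$. One way is to work on $\set{\abs{x} > \eps}$, using the charge cancellation you noted. Near $0$, membership follows from the charge asymptotics. At infinity you need $u_1/u_2$ bounded above and below, i.e.\ $u_j \sim c_j G_\omega$ with $c_j > 0$, or else a truncation argument.
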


Based on Theorem \ref{thm:FOR}, we obtain the following result that explores the relation between ground states of \eqref{eqn:defocusing_NLSE} and critical points of the associated action functional.

\begin{thm}
\label{thm:notions}
Suppose that either \eqref{eqn:N=2} or \eqref{eqn:N=3} is satisfied.
\begin{enumerate}
\item
\label{thm:notions:i}
There exists
$\mu_1 = \mu_1 (N, \alpha, p) > 0$
such that if $0 < \mu < \mu_1$ and $u$ is a ground state of \eqref{eqn:defocusing_NLSE} with mass $\mu$,
then \eqref{eqn:defocusing_NLSE} has a unique ground state up to phase change.
\item
\label{thm:notions:ii}
If $0 < \omega < \aev$ and $u \in \ACP$, then $u$ is a ground state of \eqref{eqn:defocusing_NLSE} with mass
$\mu := \norm{u}_{L^2}^2$.
\end{enumerate}
\end{thm}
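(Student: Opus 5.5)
We plan to combine Theorem \ref{thm:main} with Theorem \ref{thm:FOR}, using the Lagrange multiplier $\omega$ attached to each ground state.

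\textbf{Part (i).} Let $u$ be a ground state with mass $\mu<\mu_0$. By Theorem \ref{thm:main} \ref{thm:main:ii:a}, $u$ solves \eqref{eqn:stationary_defocusing_NLSE} with $\omega_u=-(h_\alpha(u,u)+\|u\|_{L^p}^p)$, so $u\in\ACP$ for $\omega=\omega_u$.

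The first step is to show $0<\omega_u<\aev$ once $\mu$ is small.
\begin{itemize}
\item For the upper bound, $h_\alpha(u,u)\ge \ev\mu$ gives $\omega_u\le \aev\mu-\|u\|_{L^p}^p$. Taking the $L^2$ inner product with $\ef$ in the equation, and using self-adjointness, gives $(\ev+\omega_u)\angles{u,\ef}=-\int |u|^{p-2}u\,\ef$. We also need $\angles{u,\ef}\neq0$. This follows from a minimizing argument: $E(\mu)<0$ for small $\mu$ while the continuous spectrum is $[0,\infty[$, so the ground state must have a nontrivial component along $\ef$. Then $\omega_u\le\aev$.
\item Strictness: if $\omega_u=\aev$, the identity above forces $\int|u|^{p-2}u\,\ef=0$. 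We expect this to contradict the fact that $|u|$ is also a minimizer (the functional is invariant under $u\mapsto|u|$, by the diamagnetic-type property of $h_\alpha$). Hence some minimizer is positive, and then $\omega_u<\aev$.
\item For the lower bound $\omega_u>0$: $\omega_u\mu=-2\E(u)-(1-\tfrac2p)\|u\|_{L^p}^p$, so we must show $-2E(\mu)>(1-\tfrac2p)\|u\|_{L^p}^p$. For small $\mu$, $E(\mu)\approx\ev\mu/2$, while $\|u\|_{L^p}^p=O(\mu^{p/2})$ since $u$ is close to $\sqrt{\mu}\,\ef/\|\ef\|$. Both estimates come from an expansion of $E(\mu)$ near $0$, which holds for $\mu<\mu_1$.
\end{itemize}

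The second step uses Theorem \ref{thm:FOR}: $u=e^{\iu\theta}u_{\omega_u}$, where $u_\omega$ is the unique positive critical point. Uniqueness up to phase therefore reduces to showing that $\omega\mapsto\|u_\omega\|_{L^2}^2$ is injective on the relevant range. Then two ground states with equal mass have equal $\omega$ and coincide up to phase.

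Injectivity is the main obstacle. We would use the bifurcation picture: $u_\omega$ bifurcates from $0$ along $\ef$ as $\omega\to\aev$, with $\|u_\omega\|_{L^2}^2\to0$. We expect the implicit function theorem, applied on the orthogonal complement of $\ef$ after Lyapunov--Schmidt reduction, to make the branch smooth with $\frac{d}{d\omega}\|u_\omega\|^2<0$ near $\aev$. This gives $\mu_1$. Alternatively, since every ground state has $\omega_u$ near $\aev$ for small $\mu$, local uniqueness of the branch suffices.

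\textbf{Part (ii).} Fix $0<\omega<\aev$ and $u\in\ACP$, and set $\mu=\|u\|^2$.
\begin{itemize}
\item If $\mu<\mu_1$, a ground state $v$ of mass $\mu$ exists by Theorem \ref{thm:main}. By part (i), $v\in\mathcal A_{\omega_v}$. Since $\omega\mapsto\|u_\omega\|^2$ is injective and $u=e^{\iu\theta}u_\omega$, we get $\omega_v=\omega$, and hence $u$ is a ground state.
\item For general $\mu$, we instead argue globally. The mass map $\omega\mapsto\|u_\omega\|^2$ is continuous on $]0,\aev[$. We would try to show that for every mass there is a minimizer, using a Lemma \ref{lem:E_is_negative_and_SD}-type strict decrease extended to all masses attained. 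Every minimizer is a critical point with multiplier in $]0,\aev[$, so by Theorem \ref{thm:FOR} it equals some $u_{\omega'}$ with $\|u_{\omega'}\|^2=\mu$. Monotonicity of the mass map then forces $\omega'=\omega$.
\end{itemize}

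Establishing this global monotonicity, for instance via a scaling/Pohozaev identity for the explicit branch from \cite{fukayaStandingWavesDefocusing2026}, is the step we expect to be hardest.
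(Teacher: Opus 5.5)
There is a genuine gap, and in both parts it comes from missing the same idea: the Nehari characterization $\Action(u)=\min_{w\in\Ne}\Action(w)$ for $u\in\ACP$, $0<\omega<\aev$. The paper relies on this fact; it follows from Theorem~\ref{thm:FOR} once a Nehari minimizer exists, since such a minimizer is a nontrivial critical point.

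\emph{Part (ii).} Your route needs three things: existence of ground states at every attained mass, global monotonicity of $\omega\mapsto\norm{u_\omega}_{L^2}^2$, and that every minimizer has its multiplier in $]0,\aev[$. None of these is established. The paper only gets existence for $\mu<\mu_0$, and it explicitly leaves open that large-mass ground states might have $\omega\le 0$. None of this is needed. Suppose some $v\in\Sp(\mu)$ had $\E(v)<\E(u)$. Then $\Action(v)<\Action(u)=-\frac{p-2}{2p}\norm{u}_{L^p}^p<0$, hence $h_\alpha(v,v)+\omega\mu<0$. So $s\mapsto\Action(sv)$ attains its minimum at some $t_v$ with $t_v v\in\Ne$ and $\Action(t_vv)\le\Action(v)<\Action(u)$, contradicting the Nehari characterization. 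This works for every $\omega\in]0,\aev[$, with no condition on the mass.

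\emph{Part (i), uniqueness.} You leave injectivity of the mass map to an unproved bifurcation/Lyapunov--Schmidt analysis. For your small-mass case of (ii) you would even need it on all of $]0,\aev[$, not just near $\aev$. The paper instead shows $\Omega_\mu\subset]0,\aev[$ for small $\mu$ and then invokes Theorem~\ref{thm:FOR}. Identifying the multiplier follows from the same comparison as above. Let $u,v$ be ground states of mass $\mu$ with multipliers $\omega_u<\omega_v$. Then $\mathcal{S}_{\omega_u}(v)=E(\mu)+\frac{\omega_u\mu}{2}=\mathcal{S}_{\omega_u}(u)$, while $\angles{\mathcal{S}_{\omega_u}'(v),v}_{\Sobolevstar,\Sobolev}=(\omega_u-\omega_v)\mu<0$. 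So the fiber $s\mapsto\mathcal{S}_{\omega_u}(sv)$ is still decreasing at $s=1$. Its Nehari point therefore has $t_v>1$, and $\mathcal{S}_{\omega_u}(t_vv)<\mathcal{S}_{\omega_u}(u)=\min_{\mathcal{N}_{\omega_u}}\mathcal{S}_{\omega_u}$, which is absurd.

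\emph{Part (i), bounds on the multiplier.} Your upper bound on $\omega_u$ is invalid as written. For complex $u$ the identity with $\ef$ fixes no sign, and knowing that \emph{some} minimizer is positive says nothing about the multiplier of the given $u$. The detour is also unnecessary. The multiplier is $\omega_u=-\parens{h_\alpha(u,u)+\norm{u}_{L^p}^p}/\mu$; the formula in Theorem~\ref{thm:main}~\ref{thm:main:ii:a} drops the $1/\mu$. With this normalization your first inequality already reads $\omega_u\le\aev-\norm{u}_{L^p}^p/\mu<\aev$, which is the paper's Step 2. Your lower bound matches the paper's Step 3 in spirit. However, you only need $\norm{u}_{L^p}^p=o(\mu)$, which follows from $\frac1p\norm{u}_{L^p}^p\le\E(u)+\frac{\aev\mu}{2}$ and Lemma~\ref{lem:limit}, not the unproved $O(\mu^{p/2})$.
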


Notice that Theorem \ref{thm:notions} does not furnish a complete answer to the question if ground states of \eqref{eqn:defocusing_NLSE} are precisely the elements of
$\ACP$ with $0 < \omega < \aev$ because it could happen that ground states with a sufficiently large mass $\mu$ are associated with a nonpositive Lagrange multiplier $\omega$. In particular, the usual approach of using a Poho\v{z}aev identity as in \cite{pomponioNonlinearScalarField2025} to determine the sign of $\omega$ does not seem to be applicable in the presence of a point interaction.

We remark that the conclusion of Theorem \ref{thm:FOR} can be extended to include the case $\omega = 0$ by considering a novel functional space instead of $\Sobolev$ (see \cite[Theorem 1.4]{fukayaStandingWavesDefocusing2026}). As such, it suffices to consider straightforward adaptations involving \cite[Theorem 1.5]{fukayaStandingWavesDefocusing2026} to extend Theorem \ref{thm:notions} \ref{thm:notions:ii} to the case
$\omega = 0$.

\subsection*{Notation and conventions}

We consider complex functional spaces whose elements are functions defined a.e. in $\real^N$ and we integrate over the entire space
$\real^N$ when the domain of integration is omitted.

\subsection*{Acknowledgement}
This study was financed, in part, by the São Paulo Research Foundation (FAPESP), Brazil, Process Number \#2024/20593-0. The first-named author (M.I.) is supported by JSPS, the Grant-in-Aid for Scientific Research (C) (No. 23K03174) and the Grant-in-Aid for Transformative Research Areas (B) (No. 25K24910).

\section{Preliminaries}

It follows from the Plancherel theorem that
\begin{equation}
\label{eqn:norm_of_Glambda}
\|G_\lambda\|_{L^2}^2
=
\begin{cases}
\frac{1}{4 \pi \lambda},
&\text{if} ~ N = 2;
\\
\frac{1}{8 \pi \sqrt{\lambda}},
&\text{if} ~ N = 3.
\end{cases}
\end{equation}
A routine computation confirms the following characterization of weak convergence in $\Sobolev$.

\begin{lem}
\label{lem:weak_convergence}
Suppose that
$\set{u_k}_{k \in \nat \cup \set{\infty}} \subset \Sobolev$.
Consider a fixed $\lambda > 0$ and given
$k \in \nat \cup \set{\infty}$, let
$\phi_k = u_k - Q (u_k) G_\lambda \in H^1$.
Then $u_k \rightharpoonup u_\infty$ in $\Sobolev$ as
$k \to \infty$ if, and only if,
$\phi_k \rightharpoonup \phi_\infty$ in $H^1$ and
$Q (u_k) \to Q (u_\infty)$ as $k \to \infty$.
\end{lem}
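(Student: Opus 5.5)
The plan is to reduce weak convergence in $\Sobolev$ to weak convergence of the regular parts in $H^1$ together with convergence of the charges. For this I will write the inner product $\angles{\cdot,\cdot}_{\Sobolev}$ in terms of the pair $(\phi,q)$ for the fixed $\lambda$, and then use that this pair is a topological isomorphism $\Sobolev \cong H^1 \times \complex$.

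First I will show that the linear map
\[
T \colon H^1 \times \complex \to \Sobolev, \quad T(\phi,q) = \phi + q G_\lambda,
\]
is a bounded bijection with bounded inverse $u \mapsto (u - Q_\alpha(u) G_\lambda, Q_\alpha(u))$. Bijectivity follows from Remark \ref{rmk:decomposition}, since the decomposition with a fixed $\lambda$ is unique. For boundedness I expand
\[
\norm{u}_{\Sobolev}^2 = \norm{\nabla\phi}_{L^2}^2 + \lambda\norm{\phi}_{L^2}^2 + (1+\aev-\lambda)\norm{\phi+qG_\lambda}_{L^2}^2 + \beta_\alpha(\lambda)\abs{q}^2 .
\]
Together with $\norm{G_\lambda}_{L^2}<\infty$ from \eqref{eqn:norm_of_Glambda}, this gives $\norm{T(\phi,q)}_{\Sobolev}\lesssim \norm{\phi}_{H^1}+\abs{q}$. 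Since $\Sobolev$ is a Hilbert space, I can then invoke the open mapping theorem to get boundedness of $T^{-1}$, which avoids bounding the charge directly.

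The main obstacle is the completeness of $\Sobolev$ with this norm, which the open mapping theorem needs. If the paper's setting does not already grant it, I will instead prove $\abs{q}+\norm{\phi}_{H^1}\lesssim\norm{u}_{\Sobolev}$ directly. Choosing $\lambda'$ large so that $\beta_\alpha(\lambda')>0$, the form becomes coercive in the decomposition at $\lambda'$. I then pass back to $\lambda$ using
\[
\phi_\lambda=\phi_{\lambda'}+q(G_{\lambda'}-G_\lambda),
\]
where $G_{\lambda'}-G_\lambda\in H^1$ (indeed $H^2$).

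Since $T$ is a linear homeomorphism, it is weak-to-weak continuous in both directions, because bounded linear maps preserve weak convergence. Weak convergence in $H^1\times\complex$ is exactly weak convergence of the $H^1$ components together with convergence of the $\complex$ components. Applying $T^{-1}$ and $T$ therefore gives the two implications of the equivalence.
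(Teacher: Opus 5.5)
Your argument is correct and carries out the ``routine computation'' that the paper invokes without giving details: you identify $\Sobolev$ with $H^1\times\complex$ through $(\phi,q)\mapsto\phi+qG_\lambda$ and transfer weak convergence componentwise. One simplification is available. Taking $\lambda'=1+\aev$ in your direct estimate makes the $L^2$ term in your expansion vanish, so that $\norm{u}_{\Sobolev}^2=\norm{\nabla\phi_{\lambda'}}_{L^2}^2+\lambda'\norm{\phi_{\lambda'}}_{L^2}^2+\beta_\alpha(\lambda')\abs{Q_\alpha(u)}^2$ with $\beta_\alpha(\lambda')>0$. This gives both norm bounds, and completeness of $\Sobolev$, at once, so the open mapping theorem is unnecessary. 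If you instead take $\lambda'>1+\aev$, you also need $\norm{u}_{L^2}\le\norm{u}_{\Sobolev}$ to absorb the resulting negative $L^2$ term.
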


In the following lemmata, we obtain a few properties of the energy functional $\E$. Our first goal is to obtain a coercivity-type result.

\begin{lem}
\label{lem:coercivity}
Suppose that $(\mu_k)_{k \in \nat}$ is a bounded sequence of positive numbers and
$\{u_k\}_{k \in \nat} \subset \Sobolev$
is such that
\begin{enumerate}
\item
\label{lem:coercivity:i}
$\|u_k\|_{\Sobolev} \to \infty$ as $k \to \infty$;
\item
\label{lem:coercivity:ii}
given $k \in \nat$, it holds that $u_k \in \Sp \parens{\mu_k}$.
\end{enumerate}
Then $\E \parens{u_k} \to \infty$ as $k \to \infty$. In particular, the constrained energy functional $\E|_{\Sp (\mu)}$ is coercive for every $\mu > 0$.
\end{lem}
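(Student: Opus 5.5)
The plan is to control the $\Sobolev$-norm of $u_k$ by the quadratic form $h_\alpha$ plus the mass, and then to observe that the nonlinear term $\frac1p\|u\|_{L^p}^p$ is nonnegative in the defocusing case, so it can simply be dropped.

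By the definition of the inner product,
\[
\|u_k\|_{\Sobolev}^2 = h_\alpha(u_k,u_k) + (1+\aev)\|u_k\|_{L^2}^2 = h_\alpha(u_k,u_k) + (1+\aev)\mu_k .
\]
Since $(\mu_k)_{k\in\nat}$ is bounded, say $\mu_k \le M$, hypothesis \ref{lem:coercivity:i} gives
\[
h_\alpha(u_k,u_k) = \|u_k\|_{\Sobolev}^2 - (1+\aev)\mu_k \ge \|u_k\|_{\Sobolev}^2 - (1+\aev)M \to \infty .
\]
Because $\|u_k\|_{L^p}^p \ge 0$, we then have
\[
\E(u_k) \ge \tfrac12 h_\alpha(u_k,u_k) \to \infty .
\]
The "in particular" statement follows by taking $\mu_k = \mu$ constant: any sequence in $\Sp(\mu)$ with $\Sobolev$-norm tending to infinity has energy tending to infinity, which is exactly coercivity of $\E|_{\Sp(\mu)}$.

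I do not expect a genuine obstacle here. The only point to check is that $\|\cdot\|_{\Sobolev}^2$ is literally $h_\alpha(u,u) + (1+\aev)\|u\|_{L^2}^2$, which holds by definition. This is what makes $\angles{\cdot,\cdot}_{\Sobolev}$ positive definite, since $h_\alpha(u,u) \ge \ev\|u\|_{L^2}^2$ gives $\|u\|_{\Sobolev}^2 \ge \|u\|_{L^2}^2$. No decomposition $u = \phi_\lambda + Q_\alpha(u)G_\lambda$ or estimate on the charge is needed, because the defocusing sign removes any need to control the $L^p$ term from above.
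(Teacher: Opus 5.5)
Your proof is correct. It differs from the paper's only in bookkeeping. The paper fixes $\lambda > \aev$ so that $\beta_\alpha(\lambda) > 0$, writes $u_k = \phi_k + Q_\alpha(u_k) G_\lambda$, and expands
\[
\E(u_k) = \frac12\parens*{\|\nabla\phi_k\|_{L^2}^2 + \lambda\|\phi_k\|_{L^2}^2 + \beta_\alpha(\lambda)\abs{Q_\alpha(u_k)}^2} - \frac{\lambda\mu_k}{2} + \frac1p\|u_k\|_{L^p}^p .
\]
It then asserts that the bracket blows up by hypothesis \ref{lem:coercivity:i}. That step rests on the identity $\|u_k\|_{\Sobolev}^2 = \|\nabla\phi_k\|_{L^2}^2 + \lambda\|\phi_k\|_{L^2}^2 + \beta_\alpha(\lambda)\abs{Q_\alpha(u_k)}^2 + (1+\aev-\lambda)\mu_k$. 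This is exactly your identity $h_\alpha(u_k,u_k) = \|u_k\|_{\Sobolev}^2 - (1+\aev)\mu_k$ written in the decomposition.

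Your version is the coordinate-free form of the same argument, and it states explicitly the step the paper leaves implicit. The paper's phrasing ``either $\ldots$ or $|Q_\alpha(u_k)| \to \infty$'' is slightly loose, since what is actually needed is that the sum blows up. What the paper's formulation buys is that it displays separate control of the regular part $\phi_k$ in $H^1$ and of the charge $Q_\alpha(u_k)$. That is the form in which bounded minimizing sequences are later combined with Lemma \ref{lem:weak_convergence}. For the lemma itself your two lines suffice, and both proofs use the defocusing sign in the same way to discard $\frac1p\|u_k\|_{L^p}^p$.
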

\begin{proof}
Fix $\lambda > \aev$, so that
$\beta_\alpha (\lambda) > 0$. To simplify the notation, let
\[\phi_k = u_k - Q_\alpha (u_k) G_\lambda \in H^1\]
for every $k \in \nat$. It follows from \ref{lem:coercivity:i} that either
\[
\|\nabla \phi_k\|_{L^2}^2 + \lambda \|\phi_k\|_{L^2}^2
\xrightarrow[k \to \infty]{}
\infty
\]
or $|Q_\alpha (u_k)| \to \infty$ as $k \to \infty$. The set
$\{\mu_k\}_{k \in \nat}$ is bounded, so
\begin{align*}
\E (u_k)
&=
\frac{1}{2}
\parens*{
	\|\nabla \phi_k\|_{L^2}^2
	+
	\lambda \|\phi_k\|_{L^2}^2
	+
	\beta_\alpha (\lambda) \abs*{Q_\alpha (u_k)}^2
}
-
\frac{\lambda \mu_k}{2}
+
\frac{1}{p}
\norm{u_k}_{L^p}^p
\\
&\xrightarrow[k \to \infty]{}
\infty.
\end{align*}
\end{proof}

To finish, we show that $\E$ is weakly lower semicontinuous.

\begin{lem}
\label{lem:weak_lower_semicontinuity}
The energy functional
$\E \colon \Sobolev \to \real$
is weakly lower semicontinuous.
\end{lem}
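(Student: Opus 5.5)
The plan is to take a sequence $u_k \rightharpoonup u_\infty$ in $\Sobolev$, fix $\lambda > \aev$ so that $\beta_\alpha(\lambda) > 0$, and write $\phi_k = u_k - Q_\alpha(u_k) G_\lambda$. By Lemma \ref{lem:weak_convergence}, $\phi_k \rightharpoonup \phi_\infty$ in $H^1$ and $Q_\alpha(u_k) \to Q_\alpha(u_\infty)$. As in the proof of Lemma \ref{lem:coercivity}, I expand
\[
\E(u) = \frac12\parens*{\|\nabla\phi\|_{L^2}^2 + \lambda\|\phi\|_{L^2}^2 + \beta_\alpha(\lambda)|Q_\alpha(u)|^2} - \frac{\lambda}{2}\|u\|_{L^2}^2 + \frac1p\|u\|_{L^p}^p,
\]
and treat each term separately. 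The first bracket is a weakly lower semicontinuous quadratic form in $(\phi, Q)$: the $H^1$-type part is a norm, hence weakly l.s.c., and the charge term converges.

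The real difficulty is the term $-\frac{\lambda}{2}\|u\|_{L^2}^2$. It has the wrong sign, since the $L^2$ norm is only l.s.c. and not continuous under weak convergence. My first attempt would be to avoid this term altogether by not reducing to $\lambda > \aev$, and instead using directly
\[
h_\alpha(u,u) = \|u\|_{\Sobolev}^2 - (1+\aev)\|u\|_{L^2}^2 .
\]
This still has a negative $L^2$ term, so it does not help by itself. The better route is a spectral splitting. Decompose
\[
u = c(u)\, \ef/\|\ef\|_{L^2} + w, \qquad w \perp \ef \text{ in } L^2 .
\]
Then
\[
h_\alpha(u,u) = \ev |c(u)|^2 + h_\alpha(w,w),
\]
where $h_\alpha(w,w) \ge 0$ because the spectrum of $\op$ restricted to $\{\ef\}^\perp$ lies in $[0,\infty[$. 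The map $u \mapsto c(u) = \angles{u, \ef}_{L^2}/\|\ef\|_{L^2}$ is a weakly continuous linear functional, since $\Sobolev \hookrightarrow L^2$ continuously. Hence $w_k \rightharpoonup w_\infty$, and $w \mapsto h_\alpha(w,w)$ is a nonnegative continuous quadratic form on the closed subspace $\{\ef\}^\perp \cap \Sobolev$. It is therefore convex and continuous, hence weakly l.s.c. This gives
\[
\liminf_k h_\alpha(u_k,u_k) \ge h_\alpha(u_\infty,u_\infty).
\]

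For the $L^p$ term, $\|u_k\|_{L^p}^p$ is bounded because weakly convergent sequences are bounded and $\Sobolev \hookrightarrow L^p$. To get weak l.s.c. I would argue that $u_k \rightharpoonup u_\infty$ in $L^p$; this holds since $\phi_k \rightharpoonup \phi_\infty$ in $H^1 \hookrightarrow L^p$ and $Q_\alpha(u_k) G_\lambda \to Q_\alpha(u_\infty) G_\lambda$ in $L^p$. Weak l.s.c. of the $L^p$ norm then concludes. I need $p$ in the range of the embedding: for $N=3$ this means $2<p<3$, which is exactly what \eqref{eqn:N=3} provides. Summing the two liminf inequalities gives $\liminf \E(u_k) \ge \E(u_\infty)$.

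The main obstacle is the sign-indefinite part of $h_\alpha$. The spectral projection onto the one-dimensional eigenspace is the step where care is needed: I must justify that $h_\alpha$ is nonnegative on the $L^2$-orthogonal complement of $\ef$ within the form domain. This follows from the spectral theorem applied to $\op$, since its spectral measure on $\{\ef\}^\perp$ is supported in $[0,\infty[$.
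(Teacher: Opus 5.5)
Your spectral-splitting argument is correct, and it takes a different route from the paper's. The paper never leaves the concrete decomposition. It takes $\lambda = \aev$, so that $\beta_\alpha(\aev) = 0$, and writes $u_k = \phi_k + q_k \ef$. In effect it uses $h_\alpha(u_k,u_k) = \norm{\nabla\phi_k}_{L^2}^2 - \aev\parens{2\Re(\overline{q_k}\angles{\phi_k,\ef}_{L^2}) + \abs{q_k}^2\norm{\ef}_{L^2}^2}$. The bracket converges by Lemma \ref{lem:weak_convergence}, and the gradient and $L^p$ terms are weakly lower semicontinuous. This is exactly how your abandoned first attempt can be repaired. The trouble there came from absorbing $\lambda\norm{\phi}_{L^2}^2$ into the $H^1$-norm and leaving $-\frac{\lambda}{2}\norm{u}_{L^2}^2$ on its own. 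The difference $\lambda(\norm{\phi}_{L^2}^2 - \norm{u}_{L^2}^2)$ involves only $q$ and $\angles{\phi, G_\lambda}_{L^2}$, so it is weakly continuous for every $\lambda>0$. Your route instead removes the one-dimensional negative eigenspace in $L^2$ and uses convexity of the nonnegative form $h_\alpha$ on $\{\ef\}^\perp$. It relies on three things: the form version of the spectral theorem; the fact that $\sigma(\op)\cap\,]-\infty,0[\,=\{\ev\}$ with $\ev$ simple; and, implicitly, $h_\alpha(\ef,w) = \ev\angles{\ef,w}_{L^2} = 0$ for $w\perp\ef$, which holds because $\ef\in\Dom(\op)$. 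In exchange, it needs neither Lemma \ref{lem:weak_convergence} nor the explicit structure of $\Sobolev$. It therefore works verbatim for any semibounded self-adjoint operator whose negative spectral subspace is finite-dimensional. The paper's argument is more elementary and self-contained, but it is tied to the point-interaction decomposition.
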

\begin{proof}
Suppose that
\[
\set*{u_k = \phi_k + q_k \ef}_
	{k \in \nat \cup \{\infty\}}
\subset
\Sobolev
\]
is such that $u_k \rightharpoonup u_\infty$ in $\Sobolev$ as
$k \to \infty$. By definition,
\begin{equation}
\label{lem:weak_lower_semicontinuity:aux:0}
\E (u_k)
=
\frac{1}{2} \parens*{
	\|\nabla \phi_k\|_{L^2}^2
	+
	2 \ev
	\angles*{\Re (\overline{q_k} \phi_k), \ef}_{L^2}
	+
	\ev |q_k|^2
}
+
\frac{1}{p} \|u_k\|_{L^p}^p.
\end{equation}
On one hand, Lemma \ref{lem:weak_convergence} shows that
$q_k \to q_\infty$ and $\phi_k \rightharpoonup \phi_\infty$ in $H^1$ as $k \to \infty$, so it follows that
\begin{equation}
\label{lem:weak_lower_semicontinuity:aux:1}
2 \angles*{\Re (\overline{q_k} \phi_k), \ef}_{L^2}
+
|q_k|^2
\xrightarrow[k \to \infty]{}
2
\angles*{
	\Re (\overline{q_\infty} \phi_\infty), \ef
}_{L^2}
+
|q_\infty|^2.
\end{equation}
On the other hand, the functionals
\[
H^1 \ni \psi \mapsto \|\nabla \psi\|_{L^2} \in [0, \infty[
\quad \text{and} \quad
L^p \ni \psi \mapsto \|\psi\|_{L^p} \in [0, \infty[
\]
are weakly lower semicontinuous, so
\begin{equation}
\label{lem:weak_lower_semicontinuity:aux:2}
\frac{1}{2}
\|\nabla \phi_\infty\|_{L^2}^2
+
\frac{1}{p}
\|u_\infty\|_{L^p}^p
\leq
\liminf_{k \to \infty} \parens*{
	\frac{1}{2}
	\|\nabla \phi_k\|_{L^2}^2
	+
	\frac{1}{p}
	\|u_k\|_{L^p}^p
}.
\end{equation}
In view of \eqref{lem:weak_lower_semicontinuity:aux:0}--\eqref{lem:weak_lower_semicontinuity:aux:2}, we deduce that
$
\E (u_\infty)
\leq
\liminf_{k \to \infty} \E (u_k)
$.
\end{proof}

\section{Properties of the ground state energy function $E$}
\label{sect:E_is_negative_and_SD}

The goal of this section is to prove Lemma \ref{lem:E_is_negative_and_SD}. We begin by developing a few preliminary results which will be used to prove the aforementioned lemma. The first one is the following important limit.

\begin{lem}
\label{lem:limit}
The following limit holds:
\begin{equation}
\label{lem:limit:i}
\frac{2 E (\mu)}{\aev \mu}
\xrightarrow[\mu \to 0]{}
- 1.
\end{equation}
\end{lem}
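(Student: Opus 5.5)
We prove \eqref{lem:limit:i} by matching a lower and an upper bound on $2E(\mu)/(\aev\mu)$.

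\emph{Lower bound.} Since $\ev = -\aev$ is the bottom of the spectrum of $\op$, we have $h_\alpha(u,u) \geq \ev\norm{u}_{L^2}^2$ for every $u \in \Sobolev$. The nonlinear term is nonnegative, so $\E(u) \geq \frac{\ev\mu}{2} = -\frac{\aev\mu}{2}$ for $u \in \Sp(\mu)$. This gives $2E(\mu)/(\aev\mu) \geq -1$ for every $\mu > 0$.

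\emph{Upper bound.} We test with multiples of the normalized eigenfunction. Set
\[
v_\mu := \sqrt{\mu}\,\frac{\ef}{\norm{\ef}_{L^2}} \in \Sp(\mu).
\]
This is legitimate because $\ef \in \Dom(\op) \subset \Sobolev$: it has the decomposition with $\phi = 0$, $q = 1$, $\lambda = \aev$, and $\beta_\alpha(\aev) = 0$. Since $\ef$ is the eigenfunction for $\ev$, we get $h_\alpha(v_\mu, v_\mu) = \ev\mu$. By \eqref{eqn:integrability_of_Glambda}, $\ef \in L^p$, which holds because $p > 2$ when $N = 2$, and $2 < p < 3$ when $N = 3$. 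Hence
\[
E(\mu) \leq \E(v_\mu) = -\frac{\aev\mu}{2} + \frac{\mu^{p/2}}{p}\,\frac{\norm{\ef}_{L^p}^p}{\norm{\ef}_{L^2}^p}.
\]
Dividing by $\aev\mu/2$ gives
\[
\frac{2E(\mu)}{\aev\mu} \leq -1 + C\mu^{(p-2)/2}
\]
for a constant $C = C(N,\alpha,p)$. The right-hand side tends to $-1$ as $\mu \to 0$ because $p > 2$.

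Combining the two bounds yields the limit. The only point requiring care is checking that $\ef$ belongs to $\Sobolev \cap L^p$ and that $h_\alpha(\ef,\ef) = \ev\norm{\ef}_{L^2}^2$. This follows either from the form-domain identity for operator-domain elements, or directly from the definition of $h_\alpha$ with $\lambda = \aev$: then $\phi = 0$ and $\beta_\alpha = 0$, so $h_\alpha(\ef,\ef) = -\aev\norm{\ef}_{L^2}^2$.
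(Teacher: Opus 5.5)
Your proof is correct and follows the paper's argument exactly. The lower bound comes from $\ev = \inf\sigma(\op)$ together with the nonnegativity of the $L^p$ term, and the upper bound comes from testing with $\sqrt{\mu}\,\ef/\norm{\ef}_{L^2}$, whose nonlinear contribution is $O(\mu^{p/2})$; your check that $\ef \in \Sobolev \cap L^p$ and $h_\alpha(\ef,\ef) = \ev\norm{\ef}_{L^2}^2$, via the decomposition with $\lambda = \aev$ and $\phi = 0$, just makes explicit a detail the paper leaves implicit.
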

\begin{proof}
On one hand, we have
\[
\E (u)
=
\frac{1}{2} h_\alpha (u, u)
+
\frac{1}{p} \|u\|_{L^p}^p
\geq
\frac{1}{2} h_\alpha (u, u)
\geq
- \frac{\aev \mu}{2}
\]
for every $u \in \Sp (\mu)$ because
$\ev = \inf \sigma (\op)$. In particular, we deduce that
\begin{equation}
\label{lem:limit:i:1}
\liminf_{\mu \to 0} \frac{2 E (\mu)}{\aev \mu} \geq - 1.
\end{equation}
On the other hand, it is clear that
\[
\frac{1}{\mu}
\E \parens*{
	\frac{\sqrt{\mu}}{\|\ef\|_{L^2}}
	\ef
}
=
\frac{\ev}{2}
+
\frac{\mu^{\frac{p - 2}{2}}}{p}
\parens*{
	\frac{\|\ef\|_{L^p}}{\|\ef\|_{L^2}}
}^p
\xrightarrow[\mu \to 0]{}
\frac{\ev}{2},
\]
so
\begin{equation}
\label{lem:limit:i:2}
\limsup_{\mu \to 0} \frac{2 E (\mu)}{\aev \mu} \leq - 1.
\end{equation}
Finally, \eqref{lem:limit:i} follows directly from \eqref{lem:limit:i:1} and \eqref{lem:limit:i:2}.
\end{proof}

The previous limit implies the existence of
$\mu_0 > 0$ such that $E|_{]0, \mu_0[}$ takes negative values.

\begin{cor}
\label{cor:E<0}
There exists
$\mu_0 = \mu_0 (N, \alpha, p) > 0$
such that if
$0 < \mu < \mu_0$, then $E (\mu) < 0$.
\end{cor}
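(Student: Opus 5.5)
The corollary is a direct consequence of Lemma \ref{lem:limit}, and I would deduce it from the definition of the limit with a fixed tolerance. Lemma \ref{lem:limit} gives
\[
\frac{2 E (\mu)}{\aev \mu} \xrightarrow[\mu \to 0]{} - 1 .
\]
Take the tolerance $\frac{1}{2}$. Then there exists $\mu_0 > 0$, depending only on the data $N$, $\alpha$, $p$ (since $E$ depends only on them), such that
\[
\frac{2 E (\mu)}{\aev \mu} < - \frac{1}{2}
\quad \text{for every} \quad
0 < \mu < \mu_0 .
\]

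Under either \eqref{eqn:N=2} or \eqref{eqn:N=3}, formula \eqref{eqn:ev} shows that $\ev < 0$, so $\aev > 0$. Multiplying the inequality by the positive number $\frac{\aev \mu}{2}$ gives
\[
E (\mu) < - \frac{\aev \mu}{4} < 0
\]
for every $0 < \mu < \mu_0$.

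It is worth recording that $E(\mu)$ is finite for $\mu > 0$. It is bounded below by $\frac{\ev \mu}{2}$ and bounded above by the value at a rescaled $\ef$, so the quotient in Lemma \ref{lem:limit} makes sense and no case of $E(\mu) = \infty$ needs separate treatment.

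There is no genuine obstacle here. The only point requiring care is checking that $\aev \neq 0$ in both regimes, which is immediate from \eqref{eqn:ev}. The quantitative bound $E(\mu) < -\aev\mu/4$ obtained along the way may be useful later for the strict monotonicity part of Lemma \ref{lem:E_is_negative_and_SD}, for instance via a scaling argument $E(\theta \mu) \le \theta E(\mu)$ with $\theta > 1$ combined with strict negativity.
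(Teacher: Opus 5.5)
Your proof is correct and is exactly the paper's argument: the corollary is read off from the limit in Lemma \ref{lem:limit} together with $\aev > 0$, and your explicit tolerance $\frac{1}{2}$ just makes it quantitative. One caution about your closing aside, which is not needed for this corollary: in the defocusing case scaling gives the reverse inequality $E(\theta \mu) \geq \theta E(\mu)$ for $\theta > 1$ (because $\theta^{p/2} > \theta$), which is why Lemma \ref{lem:eps_mu} instead controls the $L^p$ term through the spectral bound $h_\alpha(u,u) + \aev \mu \geq 0$, i.e.\ by $E(\mu) + \aev \mu / 2 = o(\mu)$.
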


Our strategy is to use the following elementary result to prove the existence of $\mu_0 > 0$ as in Lemma \ref{lem:E_is_negative_and_SD}.

\begin{lem}
\label{lem:elementary}
Suppose that $k > 0$, $f \colon ]0, k[ \to \real$ is lower semicontinuous and we can associate every
$a \in ]0, k[$ with $t_a > 1$ and $\eps_a > 0$ such that
\[f (t a) \leq f (a) - \eps_a (t - 1)\]
for every $t \in [1, t_a]$. Then $f$ is strictly decreasing.
\end{lem}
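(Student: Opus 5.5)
Fix $a<b$ in $]0,k[$; the goal is $f(b)<f(a)$. The hypothesis is a multiplicative one-sided differential inequality: near every point $a$, moving to the right to $ta$ with $t\in[1,t_a]$ decreases $f$ by at least a positive multiple of $t-1$. The plan is a continuity (``creeping'') argument on $[a,b]$ combined with the lower semicontinuity of $f$. The expected outcome is the chain
\[
f(s)\le f(a)-\eps_a(t-1)<f(a)
\]
on a short interval, together with $f(s)\le f(a)$ propagating all the way to $b$.

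\textbf{Step 1.} Define
\[
S:=\set*{s\in[a,b]: f(s)\le f(a)}, \qquad s^*:=\sup\set*{s\in[a,b]: [a,s]\subset S}.
\]
By hypothesis applied at $a$, every $s\in[a,t_a a]$ satisfies $f(s)\le f(a)-\eps_a(s/a-1)\le f(a)$. Hence $s^*\ge \min\set{t_a a,b}>a$.

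\textbf{Step 2.} Show that $s^*\in S$. Take $s_n\uparrow s^*$ with $s_n\in S$. Lower semicontinuity gives
\[
f(s^*)\le\liminf_{n\to\infty} f(s_n)\le f(a),
\]
so $[a,s^*]\subset S$. This is the only place lower semicontinuity is used, and it is exactly the direction needed.

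\textbf{Step 3.} Suppose $s^*<b$. Apply the hypothesis at the point $s^*$: for $t\in[1,t_{s^*}]$,
\[
f(ts^*)\le f(s^*)-\eps_{s^*}(t-1)\le f(a).
\]
Thus $[a,\min\set{t_{s^*}s^*,b}]\subset S$, which contradicts the maximality of $s^*$. Therefore $s^*=b$, and $f(s)\le f(a)$ for every $s\in[a,b]$.

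\textbf{Step 4.} Upgrade to strict inequality. If $b\le t_a a$, then Step 1 gives directly
\[
f(b)\le f(a)-\eps_a(b/a-1)<f(a).
\]
Otherwise $c:=t_a a\in\ooi{a,b}$, and applying Step 3 on $[c,b]$ with base point $c$ yields
\[
f(b)\le f(c)\le f(a)-\eps_a(t_a-1)<f(a).
\]

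The main subtlety is Step 2. The supremum of the ``good'' interval must itself be good, and this requires that $\le$-type inequalities pass to limits from the left. Lower semicontinuity provides exactly this, whereas upper semicontinuity would not suffice. The rest of the argument is bookkeeping.
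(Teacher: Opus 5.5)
Your proof is correct and is essentially the paper's argument: a continuity (supremum) argument in which lower semicontinuity closes the good set at its supremum, and the local hypothesis applied at that supremum pushes past it. The only difference is bookkeeping. The paper builds strictness directly into the set whose supremum it takes, requiring $f(a)>f(b)$ on $]a,c[$ and $f(c)\le f(a)-\eps_a(t_a-1)$. You instead first prove $f\le f(a)$ on $[a,b]$ and then upgrade to strict inequality by restarting the argument at $t_a a$.
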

\begin{proof}
~\paragraph{Step 1: beginning of the argument.}
Let us prove that if $0 < a < b < k$, then $f (a) > f (b)$. Suppose that $0 < a < k$ and let
\begin{multline*}
d
=
\sup \left\{
	c \in ]a, k[:
	f (a) > f (b) ~ \text{for every} ~ b \in ]a, c[
\right.
\\
\left.
	\text{and} ~
	f (c) \leq f (a) - \eps_a (t_a - 1)
\right\}.
\end{multline*}
We want to show that $d = k$. By contradiction, suppose that $d < k$. Due to the condition in the statement of the lemma, $d \geq t_a a > a$. In view of the definition of $d$ and the fact that $f$ is lower semicontinuous, we have
\begin{equation}
\label{lem:elementary:aux:3}
f (d) \leq f (a) - \eps_a (t_a - 1).
\end{equation}
To obtain a contradiction with the definition of $d$, we only have to prove that
\begin{equation}
\label{lem:elementary:aux:1}
f (a) > f (b) ~ \text{for every} ~ b \in ]a, t_d d[
\end{equation}
and
\begin{equation}
\label{lem:elementary:aux:2}
f (t_d d) < f (a) - \eps_a (t_a - 1).
\end{equation}

\paragraph{Step 2: proof of \eqref{lem:elementary:aux:1}.}
It follows from the definition of $d$ that
$f (a) > f (b)$ for every $b \in ]a, d[$. As such, we only have to prove that $f (a) > f (b)$ for every $b \in [d, t_d d[$. On one hand, it follows from the definition of $t_d$ that $f (d) > f (b)$ for every $b \in ]d, t_d d[$. On the other hand, it follows from \eqref{lem:elementary:aux:3} that $f (a) > f (d)$. We deduce that $f (a) > f (b)$ for every $b \in [d, t_d d[$.

\paragraph{Step 3: proof of \eqref{lem:elementary:aux:2}.}
The inequality
\[f (t_d d) \leq f (d) - \eps_d (t_d - 1) < f (d)\]
holds by hypothesis, so the result follows from \eqref{lem:elementary:aux:3}.
\end{proof}

The following lemmata verify that there exists $k > 0$ such that we can apply Lemma \ref{lem:elementary} in the case
$f := E|_{]0, k[}$. First, we address lower semicontinuity.

\begin{lem}
\label{lem:E_is_lower_semicontinuous}
If $\mu_0$ is furnished by Corollary \ref{cor:E<0}, then $E|_{]0, \mu_0[}$ is lower semicontinuous.
\end{lem}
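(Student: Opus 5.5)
Fix $\mu_\infty \in ]0, \mu_0[$ and a sequence $\mu_k \to \mu_\infty$ in $]0, \mu_0[$. We must show that
\[
E(\mu_\infty) \leq \liminf_{k \to \infty} E(\mu_k).
\]
Passing to a subsequence, we may assume the liminf is a limit, which is finite because $E \geq \ev \mu/2$ and $E(\mu_k) < 0$ by Corollary \ref{cor:E<0}.

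For each $k$ choose $u_k \in \Sp(\mu_k)$ with $\E(u_k) \leq E(\mu_k) + 1/k$. The sequence $(\E(u_k))_k$ is bounded above and $(\mu_k)_k$ is bounded, so Lemma \ref{lem:coercivity} shows that $(u_k)$ is bounded in $\Sobolev$.

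Rescale by setting $v_k := \sqrt{\mu_\infty/\mu_k}\, u_k \in \Sp(\mu_\infty)$, with $t_k := \mu_\infty/\mu_k \to 1$. Since $h_\alpha$ is quadratic,
\[
\E(v_k) = \frac{t_k}{2} h_\alpha(u_k,u_k) + \frac{t_k^{p/2}}{p}\norm{u_k}_{L^p}^p .
\]
Boundedness of $(u_k)$ in $\Sobolev$ and the embedding $\Sobolev \hookrightarrow L^p$ give that $h_\alpha(u_k,u_k)$ and $\norm{u_k}_{L^p}^p$ are bounded. Hence
\[
\E(v_k) - \E(u_k) = \frac{t_k - 1}{2} h_\alpha(u_k,u_k) + \frac{t_k^{p/2} - 1}{p}\norm{u_k}_{L^p}^p \to 0 .
\]
Therefore
\[
E(\mu_\infty) \leq \E(v_k) = \E(u_k) + o(1) \leq E(\mu_k) + 1/k + o(1),
\]
and letting $k \to \infty$ gives the claim.

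The main point to verify is the boundedness of the near-minimizers. This requires applying Lemma \ref{lem:coercivity} by contradiction: if some subsequence had $\norm{u_k}_{\Sobolev} \to \infty$, then $\E(u_k) \to \infty$, contradicting $\E(u_k) \leq E(\mu_k) + 1/k < 1$. Everything else is the routine scaling estimate above. The argument in fact shows that $E$ is continuous on $]0,\infty[$; the restriction to $]0,\mu_0[$ is only what is needed later.
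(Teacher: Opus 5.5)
Your proof is correct and follows essentially the same route as the paper: near-minimizers $u_k \in \Sp(\mu_k)$, boundedness in $\Sobolev$ via Lemma \ref{lem:coercivity} using $E(\mu_k) < 0$, and the rescaling $\sqrt{\mu_\infty/\mu_k}\, u_k$, whose energy differs from $\E(u_k)$ by $o(1)$. Your closing claim that $E$ is continuous is true, but the argument you wrote only gives lower semicontinuity; upper semicontinuity needs the reverse rescaling of a near-minimizer at $\mu_\infty$.
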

\begin{proof}
Let us follow the arguments in \cite[Step 4 at Proof of Theorem 4.1]{bellazziniScalingPropertiesFunctionals2011}. Suppose that
$
\set{\mu_k}_{k \in \nat \cup \{\infty\}}
\subset
]0, \mu_0[
$
is such that
$\lim_{k \to \infty} \mu_k = \mu_\infty$. As $E|_{]0, \mu_0[}$ takes negative values, we can associate each
$k \in \nat$ with a
$u_k \in \Sp \parens{\mu_k}$ such that
\[\E \parens{u_k} < E (\mu_k) + \frac{1}{k} < \frac{1}{k}.\]
By definition,
\begin{align}
E (\mu_\infty)
&\leq
\E \parens*{\sqrt{\frac{\mu_\infty}{\mu_k}} u_k}
-
\E (u_k)
+
\E (u_k)
\nonumber
\\
&<
\frac{1}{2}
\parens*{\frac{\mu_\infty}{\mu_k} - 1}
h_\alpha (u_k, u_k)
+
\frac{1}{p}
\parens*{\parens*{\frac{\mu_\infty}{\mu_k}}^{\frac{p}{2}} - 1}
\norm{u_k}_{L^p}^p
+
E (\mu_k)
+
\frac{1}{k}.
\label{lem:E_is_lower_semicontinuous:aux:1}
\end{align}
As $\sup_{k \in \nat} E (\mu_k) \leq 0$, it follows from Lemma \ref{lem:coercivity} that $\{u_k\}_{k \in \nat}$ is bounded in $\Sobolev$. As such, we obtain
\begin{equation}
\label{lem:E_is_lower_semicontinuous:aux:2}
\frac{1}{2}
\parens*{\frac{\mu_\infty}{\mu_k} - 1}
h_{\alpha} (u_k, u_k)
+
\frac{1}{p}
\parens*{\parens*{\frac{\mu_\infty}{\mu_k}}^{\frac{p}{2}} - 1}
\norm{u_k}_{L^p}^p
\xrightarrow[k \to \infty]{}
0.
\end{equation}
In view of \eqref{lem:E_is_lower_semicontinuous:aux:1} and \eqref{lem:E_is_lower_semicontinuous:aux:2}, we deduce that
$E (\mu_\infty) \leq \liminf_{k \to \infty} E (\mu_k)$.
\end{proof}

Now, we want to prove that the remaining condition in Lemma \ref{lem:elementary} is also satisfied.

\begin{lem}
\label{lem:eps_mu}
There exists $\mu_0 = \mu_0 (N, \alpha, p) > 0$ such that we can associate each $\mu \in ]0, \mu_0[$ with
$t_\mu > 1$ and $\eps_\mu > 0$ such that
\[
E (t \mu) \leq E (\mu) - \eps_\mu (t - 1)
\]
for every $t \in [1, t_\mu]$.
\end{lem}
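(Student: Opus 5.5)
The plan is to compare $E(t\mu)$ with $E(\mu)$ using the mass-scaling $u \mapsto \sqrt{t}\, u$. We control the error created by the defocusing term $\frac1p\|u\|_{L^p}^p$ through the sharp asymptotics of Lemma \ref{lem:limit}. I take $t_\mu := 2$ for every $\mu$ and look for $\eps_\mu$ proportional to $\aev \mu$.

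\textbf{Step 1 (scaling identity).} For $u \in \Sp(\mu)$ and $t \ge 1$ we have $\sqrt{t}\,u \in \Sp(t\mu)$. Since $h_\alpha$ is quadratic,
\[
\E(\sqrt{t}\,u) = t\,\E(u) + \frac{t^{p/2} - t}{p}\,\|u\|_{L^p}^p .
\]
Because $p>2$, the last term is nonnegative. This is exactly why a quantitative smallness bound on $\|u\|_{L^p}^p$ is needed.

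\textbf{Step 2 (a priori bound on the $L^p$ norm of almost minimizers).} Using $h_\alpha(u,u) \ge \ev \mu = -\aev\mu$ on $\Sp(\mu)$ (as in the proof of Lemma \ref{lem:limit}), every $u \in \Sp(\mu)$ with $\E(u) \le E(\mu) + \delta$ satisfies
\[
\frac1p \|u\|_{L^p}^p \le \E(u) + \frac{\aev\mu}{2} \le \eta(\mu) + \delta,
\qquad
\eta(\mu) := E(\mu) + \frac{\aev \mu}{2} \ge 0 .
\]
By Lemma \ref{lem:limit}, $\eta(\mu) = o(\mu)$ as $\mu \to 0$.

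\textbf{Step 3 (conclusion).} Fix $t \in [1,2]$ and $\delta > 0$, and choose $u$ as in Step 2. Steps 1 and 2 give
\[
E(t\mu) \le t\,(E(\mu)+\delta) + (t^{p/2}-t)\,(\eta(\mu)+\delta).
\]
Letting $\delta \to 0$, which is legitimate because $t$ is fixed, yields
\[
E(t\mu) \le t\,E(\mu) + (t^{p/2}-t)\,\eta(\mu).
\]
By the mean value theorem, $t^{p/2} - t \le C_p (t-1)$ on $[1,2]$, with for instance $C_p = \frac{p}{2}2^{\frac{p}{2}-1} - 1$. Since $\eta(\mu) \ge 0$, this gives
\[
E(t\mu) - E(\mu) \le (t-1)\bigl(E(\mu) + C_p\,\eta(\mu)\bigr).
\]
Lemma \ref{lem:limit} gives $E(\mu) = -\frac{\aev\mu}{2} + o(\mu)$ and $\eta(\mu) = o(\mu)$. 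Hence we can choose $\mu_0 = \mu_0(N,\alpha,p) > 0$ such that $E(\mu) + C_p \eta(\mu) \le -\frac{\aev \mu}{4}$ for all $\mu \in ]0,\mu_0[$. We then set $\eps_\mu := \frac{\aev\mu}{4}$ and $t_\mu := 2$.

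The only delicate point is Step 2. The almost-minimizer error $\delta$ must be sent to zero for each fixed $t$ separately, rather than being fixed beforehand, because otherwise a term $t\delta$ would survive as $t \to 1$. Beyond that, the key input is that the $o(\mu)$ control in Lemma \ref{lem:limit} makes the defocusing penalty negligible against the linear gain $(t-1)E(\mu)$. The constant $\mu_0$ obtained here can then be intersected with the one from Corollary \ref{cor:E<0} for the application of Lemma \ref{lem:elementary}.
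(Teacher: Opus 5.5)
Your proof is correct and follows essentially the paper's argument: the same mass scaling $u \mapsto \sqrt{t}\,u$, the same bound $\frac{1}{p}\|u\|_{L^p}^p \le \E(u) + \frac{\aev\mu}{2}$, and the same passage to the limit giving $E(t\mu) \le tE(\mu) + (t^{p/2}-t)\bigl(E(\mu)+\frac{\aev\mu}{2}\bigr)$, with Lemma \ref{lem:limit} used to choose $\mu_0$. The only difference is that the paper checks that the derivative of this majorant at $t=1$ is negative, whereas you bound $t^{p/2}-t$ by the mean value theorem on $[1,2]$, which yields the explicit uniform choices $t_\mu = 2$ and $\eps_\mu = \frac{\aev\mu}{4}$.
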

\begin{proof}
In view of Lemma \ref{lem:limit}, we can fix
$\mu_0 = \mu_0 (N, \alpha, p) > 0$ such that if
$0 < \mu < \mu_0$, then
\begin{equation}
\label{lem:eps_mu:2}
E \parens{\mu}
+
\frac{p - 2}{2}
\parens*{E \parens{\mu} + \frac{\aev \mu}{2}}
<
0.
\end{equation}
Suppose that $0 < \mu < \mu_0$ and let $(u_k)_{k \in \nat}$ denote a minimizing sequence of the constrained functional
$\E|_{\Sp (\mu)}$. To simplify the notation, let
\[
q_k = Q_\alpha (u_k) \in \complex
\quad \text{and} \quad
\phi_k = u_k - q_k \ef \in H^1
\]
for every $k \in \nat$. Given $t > 1$, we let
\[u_{k, t} \parens{x} = \sqrt{t} u_k \parens{x}\]
for a.e. $x \in \real^N$, so that
$u_{k, t} \in \Sp (t \mu)$. By definition,
\[
E (t \mu)
\leq
\E (u_{k, t})
=
\frac{t}{2} h_\alpha \parens{u_k, u_k}
+
\frac{t^{\frac{p}{2}}}{p}
\norm{u_k}_{L^p}^p.
\]
By summing and subtracting
$\frac{t}{p} \norm{u_k}_{L^p}^p$,
we obtain
\begin{equation}
\label{lem:eps_mu:3}
E (t \mu)
\leq
t \E \parens{u_k}
+
\frac{1}{p}
\parens{t^{\frac{p}{2}} - t}
\norm{u_k}_{L^p}^p.
\end{equation}
As $\ev = \inf \sigma \parens{\op}$, we deduce that
$h_\alpha \parens{u_k, u_k} + \aev \mu \geq 0$.
Therefore,
\begin{equation}
\label{lem:eps_mu:4}
\frac{1}{p} \norm{u_k}_{L^p}^p
\leq
\frac{1}{2} h_\alpha \parens{u_k, u_k}
+
\frac{\aev \mu}{2}
+
\frac{1}{p} \norm{u_k}_{L^p}^p
=
\E \parens{u_k} + \frac{\aev \mu}{2}.
\end{equation}
In view of \eqref{lem:eps_mu:3} and \eqref{lem:eps_mu:4}, we obtain
\[
E \parens{t \mu}
\leq
t \E \parens{u_k}
+
\parens{t^{\frac{p}{2}} - t}
\parens*{\E \parens{u_k} + \frac{\aev \mu}{2}}.
\]
A passage to the limit shows that
$E \parens{t \mu} \leq F_\mu \parens{t}$,
where the function
$F_\mu \colon [1, \infty[ \to \real$
is defined by
\[
F_\mu \parens{t}
=
t E \parens{\mu}
+
\parens{t^{\frac{p}{2}} - t}
\parens*{E \parens{\mu} + \frac{\aev \mu}{2}}.
\]
It is clear that $F_\mu \parens{1} = E \parens{\mu}$, so we only have to prove that $F_\mu ' (1) < 0$. Clearly,
\[
F_\mu' \parens{1}
=
E \parens{\mu}
+
\frac{p - 2}{2} 
\parens*{E \parens{\mu} + \frac{\aev \mu}{2}}.
\]
Finally, the inequality $F_\mu' (1) < 0$ follows from \eqref{lem:eps_mu:2}.
\end{proof}

Finally, we proceed to the proof of Lemma \ref{lem:E_is_negative_and_SD}.

\begin{proof}[Proof of Lemma \ref{lem:E_is_negative_and_SD}]
In view of Corollary \ref{cor:E<0}, Lemmata \ref{lem:E_is_lower_semicontinuous} and \ref{lem:eps_mu}, we can fix $\mu_0 = \mu_0 (p) > 0$ such that $E|_{]0, \mu_0[}$ takes negative values and the conditions in Lemma \ref{lem:elementary} are satisfied in the case $k := \mu_0$ and
$f := E|_{]0, \mu_0[}$. As such, $E|_{]0, \mu_0[}$ is strictly decreasing due to Lemma \ref{lem:elementary}.
\end{proof}

\section{Proof of Theorem \ref{thm:main}}
\subsection{Proof of \ref{thm:main:i}}

Let $(u_k)_{k \in \nat}$ denote a minimizing sequence of
$\E|_{\Sp \parens{\mu}}$.

\paragraph{Step 1: weak convergence up to subsequence.}
It follows from Lemma \ref{lem:coercivity} that
$(u_k)_{k \in \nat}$ is bounded in $\Sobolev$. In particular, there exists $u_\infty \in \Sobolev$ such that, up to subsequence,
$u_k \rightharpoonup u_\infty$ in $\Sobolev$ as $k \to \infty$. In view of Lemma \ref{lem:weak_lower_semicontinuity}, we deduce that
\begin{equation}
\label{thm:main:i:aux:1}
\E (u_\infty)
\leq
\liminf_{k \to \infty} \E (u_k)
=
\lim_{k \to \infty} \E (u_k)
=
E (\mu)
<
0.
\end{equation}

\paragraph{Step 2: $u_\infty \in \Sp (\mu)$.}
On one hand, the restricted function
$E|_{]0, \mu_0[}$ is strictly decreasing and $\E (0) = 0$, so \eqref{thm:main:i:aux:1} implies
\begin{equation}
\label{thm:main:i:aux:2}
\|u_\infty\|_{L^2}^2 \geq \mu.
\end{equation}
On the other hand, the weak convergence
$u_k \rightharpoonup u_\infty$ in $L^2$ as $k \to \infty$ implies
\begin{equation}
\label{thm:main:i:aux:3}
\|u_\infty\|_{L^2}^2 \leq \mu.
\end{equation}
In view of \eqref{thm:main:i:aux:2} and \eqref{thm:main:i:aux:3}, we deduce that $u \in \Sp (\mu)$.

\paragraph{Step 3: conclusion.}
As $\E (u_\infty) \leq E (\mu)$ and $u_\infty \in \Sp (\mu)$, we deduce that $\E (u_\infty) = E (\mu)$. It follows that $u_\infty$ is a ground state of \eqref{eqn:defocusing_NLSE} with mass $\mu$.
\qed

\subsection{Proof of \ref{thm:main:ii}}

\paragraph{Proof of \ref{thm:main:ii:a}.}
It suffices to argue as in \cite[Appendix A]{adamiGroundStatesPlanar2022} or \cite[Proof of Proposition 1.6]{depaularamosGroundStatesPlanar2026}.
\qed

\paragraph{Proof of \ref{thm:main:ii:b}.}
\subparagraph{Proof that $Q_\alpha (u) \neq 0$.}
By contradiction, suppose that $Q_\alpha (u) = 0$. On one hand,
\[
\E (u)
=
\frac{1}{2} \|\nabla u\|_{L^2}^2
+
\frac{1}{p} \|u\|_{L^p}^p
>
0
\]
because $u \in H^1 \setminus \{0\}$. On the other hand,
$\E (u) = E (\mu) < 0$. We just obtained a contradiction, so we deduce that $Q_\alpha (u) \neq 0$.
\qed

\subparagraph{Proof that $u - Q_\alpha (u) G_\lambda \in H^1 \setminus \{0\}$ for every $\lambda > 0$.}
Let us argue as in the proof of \cite[Proposition 5.2]{adamiGroundStatesPlanar2022}. By contradiction, suppose that $\lambda > 0$ is such that
\[\phi_\lambda := u - Q_\alpha (u) G_\lambda \equiv 0.\]
It follows from \ref{thm:main:ii:a} that $u \in \Dom (\op)$ and we just proved that $Q_\alpha (u) \neq 0$, so
$\beta_\alpha (\lambda) = 0$. That is,
$\lambda = \aev$. As \eqref{eqn:stationary_defocusing_NLSE} is satisfied, we deduce that
\[
\omega
+
\ev
+
\abs*{Q_\alpha (u)}^{p - 2} \ef (x)^{p - 2}
=
0
\]
for a.e. $x \in \real^2$, which is clearly not possible.
\qed

\paragraph{Proof of \ref{thm:main:ii:c}.}
\subparagraph{Step 1: strategy of the proof.}
It is elementary that linear reflections on $\real^N$ (i.e., reflections with respect to a linear subspace of $\real^N$ with dimension $N - 1$) generate the orthogonal group $\mathrm{O} (N)$. As such, it suffices to prove that if
$T \colon \real^N \to \real^N$ is a linear reflection, then
$u (x) = u (T x)$ for a.e. $x \in \real^N$.

\subparagraph{Step 2: fixing the terminology and notation.}
Fix a linear reflection $T \colon \real^N \to \real^N$. If $g$ is a function defined a.e. on $\real^N$, we respectively let
$g_\even, g_\odd$ denote its \emph{even part with respect to $T$} and its \emph{odd part w.r.t. $T$}, i.e.,
\[
g_\even (x)
:=
\frac{g (x) + g (T x)}{2}
\quad \text{and} \quad
g_\odd (x)
:=
\frac{g (x) - g (T x)}{2}
\]
for a.e. $x \in \real^N$. Notice that
\begin{enumerate}[label=(\alph*)]
\item
$g(x) = g_\even (x) + g_\odd (x)$;
\item
$g_\even (x) = g_\even (T x)$ ($g_\even$ is \emph{even w.r.t. $T$}) and
\item
$g_\odd (x) = - g_\odd (T x)$ ($g_\odd$ is \emph{odd w.r.t. $T$}).
\end{enumerate}
for a.e. $x \in \real^N$. To simplify the notation, let
\[
q = Q_\alpha (u)
\quad \text{and} \quad
\phi = u - q \ef \in H^1.
\]

\subparagraph{Step 3: $\|u_\even\|_{L^r}^r \leq \|u\|_{L^r}^r$ for $r \in \set{2, p}$.}
Let
\[A_T = \set{x \in \real^N: Tx = x}.\]
In fact, $A_T$ is a linear subspace of $\real^N$ with dimension
$N - 1$, so we can fix
$y_T \in A_T^\perp \setminus \set{0}$
and we can write $\real^N$ as the disjoint union
$\real^N = O_T^- \sqcup A_T \sqcup O_T^+$,
where
\[
O_T^+
:=
\set*{
	x \in \real^N: x \cdot y_T > 0
}
\quad \text{and} \quad
O_T^-
:=
\set*{
	x \in \real^N: x \cdot y_T < 0
}.
\]
The function
\[
\complex \ni z \mapsto |z|^r \in [0, \infty[
\]
is convex, so
\begin{equation}
\label{thm:main:ii:c:1}
\abs{u_\even}^r
\leq
\frac{
	\abs{u_\even + \phi_\odd}^r
	+
	\abs{u_\even - \phi_\odd}^r
}{2}
\quad \text{a.e. in} \quad
\real^N.
\end{equation}
Notice that $u_\even = \phi_\even + \ef$ and $u_\odd = \phi_\odd$ because the Green's function $\ef$ is even with respect to $T$. Therefore,
\begin{align*}
\norm{u}_{L^r}^r - \norm{u_\even}_{L^r}^r
&=
\int_{O_T^+}
	\abs*{u_\even (x) + \phi_\odd (x)}^r
	-
	\abs*{u_\even (x)}^r
\dif x
\\
&+
\int_{O_T^-}
	\abs*{u_\even (x) + \phi_\odd (x)}^r
	-
	\abs*{u_\even (x)}^r
\dif x.
\end{align*}
In view of the facts that $\phi_\odd$ is odd w.r.t. $T$, $u_\even$ is even w.r.t. $T$ and
$T (O_T^+) = O_T^-$, we obtain
\begin{align*}
\norm{u}_{L^r}^r - \norm{u_\even}_{L^r}^r
&=
\int_{O_T^+}
	\abs*{u_\even (x) + \phi_\odd (x)}´^r
	-
	\abs*{u_\even (x)}^r
\dif x
\\
&+
\int_{O_T^+}
	\abs*{u_\even (x) - \phi_\odd (x)}^r
	-
	\abs*{u_\even (x)}^r
\dif x
\end{align*}
As such, the inequality
$\|u\|_{L^r}^r \geq \|u_\even\|_{L^r}^r$
follows from \eqref{thm:main:ii:c:1}.

\subparagraph{Step 4: $\E (u_\even) \leq E (\mu)$.}
It is easy to verify that $\nabla \phi_\even$ is odd w.r.t. $T$ and $\nabla \phi_\odd$ is even w.r.t. $T$. In particular, we deduce that
\[
h_\alpha (u, u)
=
\|\nabla \phi_\even\|_{L^2}^2
+
\|\nabla \phi_\odd\|_{L^2}^2
-
2 \aev \angles*{\phi_\even, q \ef}_{L^2}
+
\ev |q|^2.
\]
and
\[
h_\alpha (u_\even, u_\even)
=
\|\nabla \phi_\even\|_{L^2}^2
-
2 \aev \angles*{\phi_\even, q \ef}_{L^2}
+
\ev |q|^2.
\]
Therefore, $h_\alpha (u_\even, u_\even) \leq h_\alpha (u, u)$. Due to this inequality and the inequality in Step 3, we deduce that
$\E (u_\even) \leq \E (u) = E (\mu)$.

\subparagraph{Step 5: conclusion.}
Let $\nu = \norm{u_\even}_{L^2}^2$. As
$\E (u_\even) \leq E (\mu) < 0$, we deduce that $\nu > 0$.
On one hand, the function $E|_{]0, \mu_0[}$ is strictly decreasing and $E (\nu) \leq \E (u_\even) \leq E (\mu)$, so $\nu \geq \mu$. On the other hand, it follows from Step (3) that $\nu \leq \mu$. As such, we deduce that $\nu = \mu$. As
\[
\mu
=
\norm{u}_{L^2}^2
=
\norm{u_\even}_{L^2}^2 + \norm{u_\odd}_{L^2}^2
=
\nu + \norm{u_\odd}_{L^2}^2
\]
and $\nu = \mu$, we conclude that $u_\odd \equiv 0$.
\qed

\section{Proof of Theorem \ref{thm:notions}}
\subsection{Proof of \ref{thm:notions:i}.}

~\paragraph{Step 1: preliminaries.}
Let $\mu_0 = \mu_0 (N, \alpha, p) > 0$ be furnished by Lemma \ref{lem:E_is_negative_and_SD}. In view of Theorem \ref{thm:main} \ref{thm:main:i}, we can associate each $\mu \in ]0, \mu_0[$ with the following nonempty set of Lagrange multipliers associated with ground states with mass $\mu$:
\begin{multline*}
\Omega_\mu
=
\left\{
	\omega \in \real:
	\text{there exists} ~
	u_\mu \in \Sp (\mu)
\right.
\\
\left.
	\text{such that} ~
	\E (u_\mu) = E (\mu)
	~ \text{and} ~
	\Action' (u_\mu) = 0
\right\}.
\end{multline*}

\paragraph{Step 2: $\omega < \aev$ for every $\omega \in \Omega_\mu$ and $\mu \in ]0, \mu_0[$.}
We want to show that
\begin{equation}
\label{thm:notions:i:step2:1}
\Omega_\mu \subset \ooi*{- \infty, \aev}
~\text{for every}~
\mu \in ]0, \mu_0[.
\end{equation}
By contradiction, suppose that $0 < \mu < \mu_0$,
$\omega \in \Omega_\mu$ and $\omega \geq \aev$. In particular, we can fix $u \in \Sp (\mu)$ such that
$\E (u) = E (\mu)$ and $\Action'(u) = 0$. On one hand,
\begin{equation}
\label{thm:notions:i:step2:2}
\angles*{\Action' (u), u}_{\Sobolevstar, \Sobolev}
=
h_\alpha (u, u)
+
\omega \mu
+
\norm{u}_{L^p}^p
=
0.
\end{equation}
On the other hand,
\[
h_\alpha (u, u)
+
\omega \mu
\geq
h_\alpha (u, u)
+
\aev \mu
\geq
0
\]
because $\ev = \inf \sigma (\op)$. As such, it follows from \eqref{thm:notions:i:step2:2} that $u \equiv 0$. We obtained a contradiction with the fact that
$\norm{u}_{L^2}^2 = \mu > 0$, so we deduce that \eqref{thm:notions:i:step2:1} holds.

\paragraph{Step 3: $\inf \Omega_\mu \to \aev$ as $\mu \to 0$.}
Let us prove that the following limit holds:
\begin{equation}
\label{thm:notions:i:step3:1}
\inf \Omega_\mu \xrightarrow[\mu \to 0]{} \aev.
\end{equation}
Suppose that $\parens{\mu_n}_{n \in \nat}$
is a sequence such that $\mu_n \to 0$ as
$n \to \infty$ and $\parens{\omega_n}_{n \in \nat}$ is a sequence of Lagrange multipliers such that $\omega_n \in \Omega_{\mu_n}$ for every $n \in \nat$. Let us prove that $\omega_n \to \aev$ as
$n \to \infty$. Given $n \in \nat$, fix $u_n \in \Sp (\mu_n)$ such that $\E (u_n) = E (\mu_n)$ and
$\mathcal{S}_{\omega_n}' (u_n) = 0$.
In particular,
\begin{equation}
\label{thm:notions:i:step3:2}
\angles*{
	\mathcal{S}_{\omega_n}' (u_n), u_n
}_{\Sobolevstar, \Sobolev}
=
h_\alpha (u_n, u_n)
+
\omega_n \mu_n
+
\norm{u_n}_{L^p}^p
=
0.
\end{equation}
As $\ev = \inf \sigma (\op)$, we deduce that
\[h_\alpha (u_n, u_n) + \aev \mu_n \geq 0.\]
Therefore,
\[
\frac{1}{p}
\norm{u_n}_{L^p}^p
\leq
\frac{1}{2} h_\alpha (u_n, u_n)
+
\frac{\aev \mu_n}{2}
+
\frac{1}{p}
\norm{u_n}_{L^p}^p
=
\E (u_n)
+
\frac{\aev \mu_n}{2}.
\]
It follows from Lemma \ref{lem:limit} that
\[
\frac{\E (u_n)}{\mu_n}
+
\frac{\aev}{2}
\xrightarrow[n \to \infty]{}
0,
\]
so we deduce that
\begin{equation}
\label{thm:notions:i:step3:3}
\frac{1}{\mu_n} \norm{u_n}_{L^p}^p
\xrightarrow[n \to \infty]{}
0.
\end{equation}
In view of Lemma \ref{lem:limit} and \eqref{thm:notions:i:step3:3}, we obtain the following limit:
\begin{equation}
\label{thm:notions:i:step3:4}
\frac{h_\alpha (u_n, u_n)}{\aev \mu_n}
\xrightarrow[n \to \infty]{}
- 1.
\end{equation}
Finally, the convergence $\omega_n \to \aev$ as $n \to \infty$ follows from \eqref{thm:notions:i:step3:2}--\eqref{thm:notions:i:step3:4} and we deduce that \eqref{thm:notions:i:step3:1} holds.

\paragraph{Step 4: conclusion.}
It follows from \eqref{thm:notions:i:step2:1} and \eqref{thm:notions:i:step3:1} that we can fix
$\mu_1 = \mu_1 (N, \alpha, p) > 0$
such that
\begin{equation}
\label{thm:notions:i:step4:1}
\Omega_\mu \subset \ooi*{0, \aev}
\quad \text{for every} \quad
\mu \in ]0, \mu_1[.
\end{equation}
Finally, the result follows from an application of Theorem \ref{thm:FOR}.
\qed

\subsection{Proof of \ref{thm:notions:ii}.}
By contradiction, suppose that there exists
$v \in \Sp (\mu)$ such that
\begin{equation}
\label{thm:notions:ii:1}
\E (v) < \E (u).
\end{equation}

Let us prove that
\begin{equation}
\label{thm:notions:ii:step2:1}
h_\alpha (v, v) < 0.
\end{equation}
In view of the equalities
\begin{equation}
\label{thm:notions:ii:step2:1.5}
\norm{u}_{L^2}^2 = \norm{v}_{L^2}^2 = \mu,
\end{equation}
it follows from \eqref{thm:notions:ii:1} that
\begin{equation}
\label{thm:notions:ii:step2:2}
\Action (v) < \Action (u).
\end{equation}
As $u \in \ACP$, we deduce that
\[
0
=
\angles*{\Action' (u), u}_{\Sobolevstar, \Sobolev}
=
h_\alpha (u, u) + \omega \mu + \norm{u}_{L^p}^p.
\]
Therefore,
\begin{equation}
\label{thm:notions:ii:step2:3}
\Action (u)
=
- \frac{p - 2}{2 p}
\norm{u}_{L^p}^p
<
0.
\end{equation}
It follows from \eqref{thm:notions:ii:step2:2} and \eqref{thm:notions:ii:step2:3} that
$\E (v) \leq \Action (v) < 0$. As
$\norm{v}_{L^p} > 0$, we deduce that \eqref{thm:notions:ii:step2:1} holds.

Consider the following function:
\begin{equation}
\label{thm:notions:ii:step3:1}
]0, \infty[ \ni t
\mapsto
\Action (t v) \in \real.
\end{equation}
Clearly, \eqref{thm:notions:ii:step2:1} implies the existence of a unique $t_v > 0$ such that
\[\Action (t_v v) = \min_{s > 0} \Action (s v).\]
It also holds that $t_v v \in \Ne$, where
\[
\Ne
:=
\set*{
	w \in \Sobolev \setminus \set{0}:
	\angles*{\Action' (w), w}_{\Sobolevstar, \Sobolev}
	=
	0
}
\]
denotes the \emph{Nehari manifold} associated with
$\Action$. In view of \eqref{thm:notions:ii:step2:2}, we deduce that
$\Action (t_v v) \leq \Action (v) < \Action (u)$.
This contradicts the fact that
\[\Action (u) = \min_{w \in \Ne} \Action (w).\]
As such, we deduce that \eqref{thm:notions:ii:1} cannot hold.
\qed

\addcontentsline{toc}{section}{References}
\sloppy
\printbibliography
\end{document}